\DeclareSymbolFont{rsfscript}{OMS}{rsfs}{m}{b}
\DeclareSymbolFontAlphabet{\mathrsfs}{rsfscript}
\renewcommand{\mathcal}{\mathrsfs}
\newcommand{\DMO}{\DeclareMathOperator}
\numberwithin{equation}{subsection}
\theoremstyle{plain} % default
\newtheorem{prop}[subsection]{Proposition}
\newtheorem{lemma}[subsection]{Lemma}
\theoremstyle{definition}
\theoremstyle{remark}
\newtheorem{remark}[subsection]{Remarks}
\newtheorem*{case*}{Case}
\renewcommand{\p@enumi}{\thesubsection}
\renewcommand{\p@enumii}{\thesubsection\theenumi}
\newenvironment{num}{\renewcommand{\theenumi}{(\alph{enumi})}
                      
                                          \begin{enumerate} }
                    {\end{enumerate} }
 \newenvironment{conds}{\renewcommand{\theenumi}{(\roman{enumi})}
                       
                        \begin{enumerate} }
                     {\end{enumerate} }
\newcommand{\be}{\begin{equation}}
\newcommand{\ee}{\end{equation}}
\newcommand{\bee}{\begin{equation*}}
\newcommand{\eee}{\end{equation*}}
\newcommand{\bs}{\begin{split}}
\newcommand{\es}{\end{split}}
\newcommand{\bc}{\begin{cases}}
\newcommand{\ec}{\end{cases}}
\newcommand{\bml}{\begin{multline}}
\newcommand{\eml}{\end{multline}}
\newcommand{\bmll}{\begin{multline*}}
\newcommand{\emll}{\end{multline*}}
\newcommand{\belb}[1]{\mar{#1}\begin{equation}\label{#1}}
\newcommand{\mpair}[1]{\pair{\,#1\,}}
\newcommand{\mset}[1]{\set{\,#1\,}}
\newcommand{\pair}[1]{\langle #1\rangle}
\newcommand{\set}[1]{\{#1\}}
\newcommand{\CC}{{\mathcal C}}
\def\a{\alpha}
\def\b{\beta}
\def\g{\gamma}
\def\G{\Gamma}
\def\d{\delta}
\def\D{\Delta}
\def\L{\Lambda}
\def\s{\sigma}
\def\th{\theta}
\def\t{\tau}
\def\to{\rightarrow}
\def\lsupp#1#2{\kern\scriptspace\vphantom{#2}^{#1}\kern-\scriptspace#2}
\def\lsubb#1#2{\kern\scriptspace\vphantom{#2}_{#1}\kern-\scriptspace#2}
\newcommand{\seq}{{\,\subseteq\,}}
\newcommand{\sneq}{{\,\subsetneq\,}}
\newcommand{\sm}{{\,\setminus\,}}
\newcommand{\eset}{{\emptyset}}
\newcommand{\ck}[1]{{#1}^\vee}
\newcommand{\wh}{\widehat}
\def\dotcup{\hskip1mm\dot{\cup}\hskip1mm}
\newcommand{\mar}{\marginpar}
\DeclareMathOperator{\supp}{{\mathrm{supp}}}
\DMO{\Supp}{{\mathrm{Supp}}}
\DMO{\Rad}{{\mathrm{Rad}}}
\DMO{\Ann}{{\mathrm{Ann}}}
\DMO {\Add}{{\mathrm{Add}}}
\DMO {\add}{{\mathrm{add}}}
\DMO {\Img}{{\mathrm{Im}}}
\DMO {\coim}{{\mathrm{Coim}}}
\DMO {\coker}{{\mathrm{Coker}}}
\DMO {\colim}{\varinjlim}
\DMO {\plim}{\varprojlim}
\DMO {\mEnd}{{\mathrm{End}}}
\DMO {\mend}{{\mathrm{end}}}
\DMO {\Proj}{{\mathrm{Proj}}}
\DMO {\Ext}{{\mathrm{Ex}t}}
\DMO {\ext}{{\mathrm{ext}}}
\DMO {\tor}{{\mathrm{tor}}}
\DMO {\Tor}{{\mathrm{Tor}}}
\DMO {\Hom}{{\mathrm{Hom}}}
\DMO {\HOM}{{\mathrm{HOM}}}
\DMO {\Modfg}{{\mathrm{-Modfg}}}
\DMO {\modulfg}{{\mathrm{-modfg}}}
\DMO {\modul}{{\mathrm{-mod}}}
\DMO {\Mod}{{\mathrm{-Mod}}}
\DMO {\pdim}{{\mathrm{proj.dim.}}}
\DMO {\PD}{{{\mathrm{Proj.Dim.}}}}
\DMO {\gldim}{{\mathrm{gr.gl.dim.\ }}}
\DMO {\grad}{{\mathrm{rad} }}
\DMO {\Sheaves}{{\mathrm{Sh}}}
\DMO {\Flab}{{\mathrm{Fl}}}
\DMO {\Poinc}{{\mathrm{Poinc}}}
\DMO {\Groth}{{{{\mathrm{K}}}_0}}
\DMO {\Mat}{{\mathrm{Mat}}}
\DMO \Sl{{\mathrm{sl}}}
\DMO \SL{{\mathrm{SL}}}
\DMO \Gl{{\mathrm{gl}}}
\DMO \GL{{\mathrm{GL}}}
\DMO \lcm{\mathrm{{lcm}}}
\DMO \rank{{\mathrm{rank}}}
\DMO \diag{{\mathrm{diag}}}
\DMO\vtx{{\mathrm{Vert}}}
\DMO\pc{{\mathrm{ParClos}}}
\DMO {\Addp}{{\mathrm{Add}}'}
\DMO {\addp}{{\mathrm{add}}'}
\DMO {\hatGr}{{\widehat{{\mathrm{K}}}_0}}
\DMO {\NExt}{{\mathrm{NExt}}}
\DMO {\nExt}{{\mathrm{next}}}
\DMO {\DExt}{{\mathrm{DEx}t}}
\DMO {\dext}{{\mathrm{dext}}}
\newcommand{\Int}{{\mathbb Z}}
\newcommand{\Nat}{{\mathbb N}}
\newcommand{\real}{{\mathbb R}}
\newcommand{\rat}{{\mathbb  Q}}
\DMO{\ob}{ob}
\DMO{\mor}{mor}
\DMO{\tr}{tr}
\DMO{\spec}{spec}
\DMO{\cov}{cov}
\DMO{\Sym}{Sym}
\DMO{\Dih}{Dih}
\DMO{\Spec}{Spec}
\DMO{\domn}{dom}
\DMO{\cod}{cod}
\DMO{\ord}{ord}
\DMO{\stab}{{\mathrm{Stab}}}
\DMO{\inter}{{\mathrm{Int}}}
\DMO{\coeff}{{\mathrm{Coeff}}}
\DMO{\conv}{{\mathrm{Conv}}}
\begin{document}

\title[Parabolic subgroup orbits]{Parabolic subgroup orbits on finite root systems}

\author{M. J.  Dyer and G. I.  Lehrer}
\address{Department of Mathematics 
\\ 255 Hurley Building\\ University of Notre Dame\\
Notre Dame, Indiana 46556, U.S.A.}
\email{dyer@nd.edu}
\address{School of Mathematics and Statistics, University of Sydney\\
 Sydney, NSW. 2006\\
 Australia.}
\email{gustav.lehrer@sydney.edu.au}
%\dedicatory{}

%\thanks{}
\keywords{root systems, Oshima's lemma, }

\subjclass[2010]{Primary: 20F55: Secondary: 17B22}
\date{\today}
\begin{abstract}  Oshima's Lemma describes the orbits of parabolic subgroups of irreducible finite Weyl groups on  crystallographic  root systems. 
This note generalises that result to all root systems of finite Coxeter groups, and provides a self contained proof, independent of the representation
theory of semisimple complex Lie algebras. 
\end{abstract}

\maketitle
%%%%%%%%%%%%%%%%%%%%%%%%%%%%%%%%
%
%                          
%
%
%%%%%%%%%%%%%%%%%%%%%%%%%%%%%%%%
\section*{Introduction}\label{s0}
This note concerns a lemma (Lemma \ref{lem:oshorig} below) of Oshima   on orbits of 
standard parabolic subgroups of an irreducible finite Weyl group acting on its 
crystallographic root system. That result is proved in \cite[Lemma 4.3]{Osh} 
using results from \cite{OdaOsh} about  the representation theory of  semisimple 
complex Lie algebras. It is applied  in  \cite{Osh}, and plays a crucial role in \cite{DyLFund}, in the
 study of conjugacy classes of reflection subgroups of finite Weyl groups.

The purpose of  this note is to give several  reformulations and generalisations  of the lemma
which apply to general root systems of finite Coxeter groups and to provide 
an elementary self-contained proof of the generalised lemma. Though the 
statements are uniform, the proof involves some casewise analysis.
% but rather  several  proofs 
%which work under various additional hypotheses and which can be combined 
%in different 
%ways to give a proof in general. 

This note is organised as follows. In Section \ref{s1}, we
 briefly state Oshima's lemma (Lemma \ref{lem:oshorig}) and our 
 generalisation (Proposition \ref{prop:osh}).  Section 2 gives more details on the notions 
 involved in the formulation of these results and their proofs. Section \ref{s3} reduces the proofs of both results  to showing that 
Proposition  \ref{prop:osh}(a) holds   for at least one root system 
of each irreducible finite Coxeter group
and proves \ref{prop:osh}(a) for  dihedral 
groups.  Section \ref{s4} proves \ref{prop:osh}(a) for 
crystallographic root systems of finite Weyl groups, using 
elementary properties of root strings. By the classification of 
irreducible finite Coxeter groups, the proof of \ref{prop:osh}(a) in general is 
reduced to the cases of $W$ of type $H_{3}$  and $H_{4}$; these 
are treated in Section \ref{s5} by using foldings of Coxeter 
graphs to reduce to types $D_{6}$ and $E_{8}$, where the result 
is known from Section \ref{s4}. 

%Similar techniques involving foldings can also be used to reduce
 %the proof for non-simply laced  crystallographic root 
%systems to that for  simply laced  root systems, 
%in which case only the simplest first part of the argument in 
%Section \ref{s4} is needed.  A uniform (but rather  lengthy and technical) proof of the main result for  
%crystallographic root systems of finite Weyl groups may also be given using   
%properties of saturated sets of weights which can be established
 %independently of their connections to representations of 
 %semisimple complex Lie algebras. 
 % We shall not give any  details of the alternative arguments, 
 %but remark that, including Oshima's original  representation theoretic proof, they provide  four natural  proofs of the
% main results for finite Weyl groups, but only one for types $H_{3}$ and $H_{4}$. 
 %It would of course be desirable to have a simple  proof applying uniformly to all types. 

%\newtheorem*{remark}{Remarks}
%\newtheorem*{note*}{Note}
\section{Orbits of parabolic subgroups on finite root systems}
\label{s1}
\subsection{Oshima's lemma}  \label{oshorig} Let $\Phi\subseteq V$ be a root system 
of a finite Coxeter  system $(W,S)$,  and  let $\Pi$ be a set of 
simple roots corresponding to  $S$ (see Section \ref{s2} for details on terminology). For  $\a\in \real\Phi$
and $\b\in \Pi$, define the \emph{root coefficient} $\a[\b]\in \real$  by  $\a=\sum_{\b\in \Pi}\a[\b]\b$; 
define  the \emph{support} of $\a$ to be  $\supp(\a):=\mset{\b\in \Pi\mid \a[\b]\neq 0}$. 

The following result is proved in \cite[Lemma 4.3]{Osh} using facts from \cite{OdaOsh} about  the representation theory of  semisimple complex Lie algebras. 
\begin{lemma} \label{lem:oshorig} Assume above  that  $W$ is an irreducible finite Weyl group and  $\Phi$ is  crystallographic.   
 Fix $\Delta\seq \Pi$, scalars $c_{\b}\in \real $ for $\b\in \Delta$, not all zero, and $l\in \real$. Let 
  \begin{equation*}
X:=\mset{\g\in \Phi\mid \mpair{\g,\g}^{1/2}=l \text{ \rm and } \g[\b]=c_{\b} \text{ \rm for all $\b\in \D$}}.
\end{equation*}  If $X$ is non-empty, then it is   a single   $W_{\Pi\sm \D}$-orbit of roots where $W_{\Pi\sm \D}$ is the standard parabolic subgroup with simple roots $\Pi\sm \D$.  Equivalently,   $\vert X\cap \CC_{\Pi\sm \D}\vert \leq 1$  where  $\CC_{\Pi\sm \D}$ is the closed  fundamental chamber
of $W_{\Pi\sm \D}$.
\end{lemma}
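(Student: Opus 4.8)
The plan is to pass to the equivalent ``chamber'' formulation and then dispose of it by a short computation with root strings, avoiding any case analysis. Throughout write $J:=\Pi\sm\Delta$ and $c:=\sum_{\b\in\Delta}c_\b\b$, so that by hypothesis $c\in\real\Delta\sm\mset{0}$ and, writing $\g-c\in\real J$ for the condition $\g[\b]=c_\b$ $(\b\in\Delta)$, we have $X=\mset{\g\in\Phi\mid \pair{\g,\g}=l^2,\ \g-c\in\real J}$. First I would record that $X$ is $W_J$-stable: each $s_\b$ with $\b\in J$ preserves the bilinear form, hence lengths, and alters a vector only by an element of $\real\b\seq\real J$, so it fixes every root coefficient $\g[\b']$ with $\b'\in\Delta$. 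Since $\CC_J$ is a fundamental domain for the action of $W_J$ on $V$, a nonempty $W_J$-stable subset of $\Phi$ is a single $W_J$-orbit precisely when it meets $\CC_J$ in at most one point. Thus it suffices to prove: if $\g,\g'\in\Phi$ satisfy $\pair{\g,\g}=\pair{\g',\g'}=l^2$, $\g-c\in\real J$ and $\g'-c\in\real J$, and $\pair{\g,\b^\vee}\ge0$, $\pair{\g',\b^\vee}\ge0$ for all $\b\in J$, then $\g=\g'$.

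Next I would show that $\pair{\g,\g'}>0$. Decompose $V=\real J\oplus(\real J)^{\perp}$ orthogonally. Since $\g-c\in\real J$ and $c\in\real\Delta$, the $(\real J)^{\perp}$-component of $\g$ is the orthogonal projection $v_0$ of $c$ onto $(\real J)^{\perp}$; the same computation applies to $\g'$, so $\g$ and $\g'$ share this component $v_0$. Because $c\neq0$, $c\in\real\Delta$, and $\real\Delta\cap\real J=\mset{0}$, we get $c\notin\real J$, hence $v_0\neq0$. Write $\g=u+v_0$ and $\g'=u'+v_0$ with $u,u'\in\real J$. The hypotheses $\pair{\g,\b^\vee}\ge0$ $(\b\in J)$ say exactly that $u$ is dominant for $W_J$ on $\real J$, and likewise $u'$; expanding $u$ and $u'$ in the basis $\mset{\varpi_\b\mid\b\in J}$ of $\real J$ dual to $\mset{\b^\vee\mid\b\in J}$ (the fundamental weights of the parabolic subsystem $\Phi\cap\real J$) gives nonnegative coefficients, and as $\pair{\varpi_\b,\varpi_{\b'}}\ge0$ for all $\b,\b'\in J$ one concludes $\pair{u,u'}\ge0$. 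Hence $\pair{\g,\g'}=\pair{u,u'}+\pair{v_0,v_0}\ge\pair{v_0,v_0}>0$.

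Finally, suppose for contradiction $\g\neq\g'$ and put $\eta:=\g-\g'$, so $\eta\in\real J$ and $\eta\neq0$. By Cauchy--Schwarz $\pair{\g,\g'}<l^2$, so $\pair{\eta,\eta}=2(l^2-\pair{\g,\g'})>0$ and $\pair{\g,\eta}=l^2-\pair{\g,\g'}=\tfrac12\pair{\eta,\eta}$, whence $\pair{\g,\eta^\vee}=1$. Since $\g,\g'$ are roots with $\pair{\g,\g'}>0$ and $\g\neq\g'$, the crystallographic hypothesis --- through the elementary root-string fact that the difference of two roots with positive inner product is again a root --- shows that $\eta$ is a root; lying in $\real J$, it belongs to $\Phi\cap\real J$, so $s_\eta\in W_J$, and $s_\eta(\g)=\g-\pair{\g,\eta^\vee}\eta=\g-\eta=\g'$. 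Thus $\g'$ lies in the $W_J$-orbit of $\g$; as both lie in the fundamental domain $\CC_J$, this forces $\g=\g'$, a contradiction, so $\g=\g'$, which completes the proof.

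I expect the main obstacle to be the middle step, the positivity $\pair{\g,\g'}>0$: it is exactly here that the hypothesis ``the $c_\b$ not all zero'' is essential (via $v_0\neq0$), together with the acuteness of the dominant cone of $\Phi\cap\real J$, i.e.\ the nonnegativity of the $\pair{\varpi_\b,\varpi_{\b'}}$; without $c\neq0$ the statement genuinely fails. The crystallographic assumption enters only in the last step, and only through the elementary theory of root strings; this is why the corresponding statement for general finite Coxeter groups (and in particular the types $H_3$ and $H_4$) requires a separate argument, such as the folding reduction.
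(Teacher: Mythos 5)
Your proof is correct, and it takes a genuinely different route from the one in the paper. The paper first reduces Lemma \ref{lem:oshorig} to Proposition \ref{prop:osh}(a) (via Lemma \ref{lem:oshequiv}(a)), then proves the latter for Weyl groups by writing $\b-\a$ as a minimal sum of positive roots of $\Phi_{J}$ and running an induction on the number of summands, using the root-string Lemma \ref{lem:rootstring} together with a case analysis on long versus short roots. You instead attack the chamber formulation $\vert X\cap\CC_{W_{J}}\vert\leq 1$ head-on. Your key new ingredient is the positivity $\mpair{\g,\g'}>0$ for two elements of $X\cap\CC_{W_{J}}$: both share the same nonzero component $v_{0}$ orthogonal to $\real\Pi_{J}$ (nonzero precisely because the $c_{\b}$ are not all zero, which is where that hypothesis does its work), while their components in $\real\Pi_{J}$ are both $W_{J}$-dominant and hence have nonnegative inner product --- a fact equivalent to \cite[Ch V, \S 3, Lemma 6(i)]{Bour}, which the paper itself invokes elsewhere. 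A single application of the standard crystallographic fact that two distinct roots with positive inner product differ by a root then yields $\g-\g'\in\Phi\cap\real\Pi_{J}=\Phi_{J}$ and $s_{\g-\g'}(\g)=\g'$, contradicting uniqueness of the chamber representative. What each approach buys: yours is shorter, needs no induction, no long/short dichotomy, and in fact never uses irreducibility of $W$ (so it proves Proposition \ref{prop:osh}(b), hence (a), for all crystallographic $\Phi$); the paper's route through Proposition \ref{prop:osh}(a) and root strings is structured so that the non-crystallographic types $H_{3}$, $H_{4}$ can be folded into $D_{6}$, $E_{8}$, whereas your final step genuinely requires crystallographicity, as you correctly observe. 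Two tiny points worth making explicit if you write this up: the strictness of the Cauchy--Schwarz inequality uses that $\g\neq\pm\g'$ (the case $\g'=-\g$ being excluded by $\mpair{\g,\g'}>0$), and the nonnegativity $\mpair{\varpi_{\b},\varpi_{\b'}}\geq 0$ amounts to the nonnegativity of the inverse of the Gram matrix of $\Pi_{J}$, which should be cited or derived from the Bourbaki lemma above.
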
 

\subsection{} \label{osh}   The result below gives  reformulations of
Lemma  \ref{lem:oshorig}  which are valid for all (possibly non-crystallographic or reducible)  finite
Coxeter groups. 
\begin{prop}\label{prop:osh} Let $\Phi$ be a root system of a finite Coxeter group $W$, with simple roots $\Pi$ and  corresponding simple reflections $S$. For $J\subseteq S$, let $W_{J}$ denote the standard parabolic subgroup of $W$ generated by $J$, and $\mathcal{C}_{W_{J}}$ denote its closed fundamental chamber.
\begin{num}
\item Let $J\seq S$. If $\a\in \Phi\sm \Phi_{J}$, then $W\a\cap (\a +\real \Pi_{J})=W_{J}\a$. 
\item Let $J\seq S$. If $\a,\b\in (\Phi\sm \Phi_{J})\cap \CC_{W_{J}}$ and $\b\in W\a\cap (\a +\real \Pi_{J})$, then $\a=\b$. 
 \item Let $\Phi'$ be the 
root system  of a parabolic subgroup $W'$ of $W$. Then
for any $\b\in \Phi\sm \Phi'$, one has $W\b\cap(\b+ \real \Phi')=W'\b$.
\end{num}\end{prop}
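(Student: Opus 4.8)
The plan is to deduce the three parts of Proposition~\ref{prop:osh} from one another, organising the work so that part~(a) is the analytic heart and parts~(b), (c) are essentially bookkeeping. First I would observe that (a) and (b) are equivalent reformulations: given $\a\in\Phi\sm\Phi_J$, the set $X:=W\a\cap(\a+\real\Pi_J)$ is visibly $W_J$-stable, contains $W_J\a$, and lies in the affine subspace $\a+\real\Pi_J$; moreover every root in $X$ has the same length as $\a$ (since all lie in a single $W$-orbit) and the same coefficients $\g[\b]$ for $\b\in\Pi\sm\Pi_J$ (because translating by $\real\Pi_J$ does not change those coordinates). Thus $X$ is exactly a set of the ``Oshima type'' appearing in Lemma~\ref{lem:oshorig}, relative to the subsystem $\Phi$ and the parabolic $W_J$, with $\D=\Pi\sm\Pi_J$. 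Since each $W_J$-orbit meets the closed fundamental chamber $\CC_{W_J}$ in exactly one point, the statement ``$X=W_J\a$'' is equivalent to ``$|X\cap\CC_{W_J}|\le 1$'', which is precisely~(b). So I would prove (a), note that it immediately gives (b), and then handle (c).

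For part~(c): a parabolic subgroup $W'$ of $W$ is by definition $W'=wW_Jw\inv$ for some $w\in W$ and $J\seq S$, with root system $\Phi'=w\Phi_J$. Given $\b\in\Phi\sm\Phi'$, write $\b=w\a$ with $\a\in\Phi\sm\Phi_J$; then
\begin{equation*}
W\b\cap(\b+\real\Phi')=w\bigl(W\a\cap(\a+\real\Pi_J)\bigr)=w(W_J\a)=W'\b,
\end{equation*}
where the middle equality is part~(a) applied to $\a$. Here I am using that $\real\Phi_J=\real\Pi_J$ and that left translation by $w$ is linear and commutes with the $W$-action appropriately. This reduces everything to~(a).

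The proof of~(a) itself is, per the introduction, the substantive part, and the authors defer it to Sections~\ref{s3}--\ref{s5}. The main obstacle is exactly this: (a) does not seem to admit a short uniform argument, and one reduces (via Section~\ref{s3}) to checking it for one root system of each irreducible finite Coxeter group, proves it for crystallographic systems using root-string combinatorics (Section~\ref{s4}), and then disposes of the non-crystallographic types $H_3$, $H_4$ by folding from $D_6$ and $E_8$ (Section~\ref{s5}). So in my plan I would explicitly record that the content of Proposition~\ref{prop:osh} is concentrated in~(a), that (b) is its chamber-theoretic restatement, and that (c) is the conjugation-equivariant consequence; the remaining effort — the casewise verification of~(a) — is where the real work lies, and it is carried out in the later sections rather than here.
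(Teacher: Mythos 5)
Your formal reductions are correct and coincide with the paper's Lemma \ref{lem:oshequiv}: the set $X=W\a\cap(\a+\real\Pi_{J})$ is indeed $W_{J}$-stable (each $w\in W_{J}$ displaces any vector by an element of $\real\Pi_{J}$ and permutes $W\a$), hence is a union of $W_{J}$-orbits each meeting $\CC_{W_{J}}$ exactly once, which yields the equivalence of (a) and (b); and your conjugation argument for (c) is exactly the paper's. One small point you elide: in deducing (a) from (b) the given root $\a$ need not lie in $\CC_{W_{J}}$, so one must pass to the dominant representatives $\a'\in W_{J}\a\cap\CC_{W_{J}}$, $\b'\in W_{J}\b\cap\CC_{W_{J}}$ and check that they still lie in $\a+\real\Pi_{J}$ and in $\Phi\sm\Phi_{J}$; this follows from the $W_{J}$-stability you noted, but it should be said explicitly.

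The genuine gap is that you never prove part (a), which is the entire substance of the Proposition; naming the strategy (reduce to irreducible $\Phi$, treat crystallographic $\Phi$ by root strings, fold $H_{3}$ and $H_{4}$ into $D_{6}$ and $E_{8}$) is an outline, not an argument. Concretely you are missing: (i) the reduction to irreducible $\Phi$, which needs the connectedness of supports to see that $\Phi\cap(\a+\real\Pi_{J})$ stays inside the component containing $\supp(\a)$; (ii) the key combinatorial fact (Lemma \ref{lem:rootstring}(b)) that if $\a_{1}+\a_{2}+\a_{3}$ and $\a_{1}+\a_{2}$ are roots but $\a_{2}+\a_{3}$ is not, then $\a_{1}+\a_{3}$ is a root --- this is what lets one rearrange a minimal expression $\b-\a=\sum_{i\geq 2}\a_{i}$ with $\a_{i}\in\Phi_{J,+}$ so that every partial sum containing $\a$ is a root; (iii) the induction on $n$ with the case analysis on root lengths carried out in Section \ref{s4}; and (iv) the bundle correspondence $\a\mapsto\set{\a,\t\a}$ of Proposition \ref{prop:noncryst}, in particular the compatibility of fundamental chambers under the embedding, without which the deduction of $H_{3}$, $H_{4}$ from $D_{6}$, $E_{8}$ does not go through. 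As written, your proposal verifies only the bookkeeping and leaves the theorem unproved.
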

\begin{remark}
%(1) We have no uniform  proof of the Proposition, but rather  several  proofs which work under varying additional hypotheses and which can be combined in different 
%ways to give a proof of  the general case. We shall give  in full  a minimal set of these arguments  in \ref{oshequiv} to 
%\ref{oshend}, and then  provide   indications of some  alternatives. 

(1) The assumption in   (a) that $\a \in \Phi\sm \Phi_{J}$  cannot be  weakened to  $\a\in \Phi$. For take  $\Phi$ of type $A_{3}$ in $\real^{4}$ with simple roots $\a_{i}:=e_{i}-e_{i+1}$ for $i=1,\ldots, 3$, $\Pi_{J}=\set{\a_{1},\a_{3}}$ and $\a=\a_{1}$. Then $\a_{3}\in  \Phi\cap (\a_{1}+\real \Pi_{J})$ but $\a_{3}\not \in W_{J}\a_{1}=\set{\pm \a_{1}}$.

(2) The proposition  does not extend as stated to  infinite Coxeter groups. 
For example, take $(W,S)$ irreducible affine of type $\widetilde  A_{3}$ with $\Pi=\set{\a_{0},\a_{1},\a_{2},\a_{3}}$ where 
$\a_{i}$ and $\a_{j}$ are joined in the Coxeter graph if $i-j\in \set{\pm 1} \pmod 4$.
Let
$\delta:=\a_{0}+\a_{1}+\a_{2}+\a_{3}$ denote  the standard indivisible isotropic root 
(which is not in  $\Phi$; see \cite{K}). Take $\Pi_{J}=\mset{\a_{1},\a_{3}}$. Let $\a=2\d+\a_{1}\in \Phi\sm \Phi_{J}$. Then
$2\d+\a_{3}\in W\a\cap ( \a+\real\Pi_{J})$ but $2\d+\a_{3} \not\in W_{J}\a$ by (1), 
since $W$ fixes $\d$.

(3) It may also be shown that if $v\in \CC_{W}$, $w\in W$ and $J\seq S$,
then $Wv\cap (v+\real \Pi_{J})=W_{J}v$. This statement generalises to possibly infinite Coxeter groups; see \cite[Lemma 2.4(d)]{dyer:imc}.

(4)  We do not know any natural result  which generalizes both  \ref{prop:osh}(a) and (3). Also, 
there is no known extension of  \ref{prop:osh}(a) or (3) to unitary reflection groups.
  \end{remark}

\section{Preliminaries}
\label{s2} The remainder of the paper gives a proof of Lemma \ref{lem:oshorig} and Proposition \ref{prop:osh}. This preliminary  section 
sets out more precisely the notation, terminology 
and background required for the statements of these results and their proofs.
\subsection{}  Let  \label{rootsyst} $V$ be a real Euclidean space i.e. a finite dimensional 
real vector space equipped with a symmetric, positive definite bilinear form $\mpair{-,-}\colon V\times V\to \real$. 
 For non-zero  $\a\in V$,  let $s_{\a}\colon V\to V$ denote the orthogonal reflection in $\a$; 
 it is the $\real$-linear map  defined by $s_{\a}(v)=v-\mpair{v,\ck \a}\a$ where $\ck \a:=\frac{2}{\mpair{\a,\a}}\a$. 
 In this paper, by  a   \emph{root system} $\Phi$  in $V$, we shall mean  a subset $\Phi$ of $V$ 
satisfying the conditions (i)--(iii) below:
\begin{conds}
\item $\Phi$ is a finite subset of $V\sm\set{0}$
\item If $\a,\b\in \Phi$, then $s_{a}(\b)\in \Phi$.
\item If $\a,c\a\in \Phi$ with $c\in \real$, then $c\in \set{\pm1}$.\end{conds}
This includes all the  most commonly used notions of roots systems of finite Coxeter groups, except for  non-reduced crystallographic root systems of finite Weyl groups.

The subgroup $W$ of $\mEnd(V)$ generated by $\mset{s_{\a}\mid \a\in \Phi}$ is a finite (real) reflection group i.e. a finite  Coxeter group.
A \emph{simple system} $\Pi$ of $\Phi$ is a linearly independent  subset $\Pi\seq \Phi$  such that 
$\Phi=\Phi_{+}\dotcup \Phi_{-}$ where $\Phi_{+}:=\Phi\cap \real_{\geq 0}\Pi$ 
and $\Phi_{-}=-\Phi_{+}$. Fix a simple system $\Pi$ (it is well  known that such 
simple systems exist). Then $\Phi_{+}$ is the corresponding positive system of $\Phi$  
and  $S:=\mset{s_{\a}\mid \a
\in \Pi}\seq W$ is called the set of \emph{simple reflections} of $W$. It is well 
known that $(W,S)$ is a Coxeter system.  The subset 
$T:=\mset{s_{\a}\mid \a\in \Phi}=\mset{wsw^{-1}\mid w\in W, s\in S}$  of $W$ 
 is called the set of reflections of $W$.
 
 The definition of root coefficients and support of roots from
 \ref{oshorig} extends without change to the broader class of root systems considered in this section.

\subsection{Dual root system} \label{dualrootsys} If $\Phi$ is a root system in 
$V$, then $\ck \Phi:=\mpair{\ck\a\mid \a\in \Phi}$ is a root system, called the 
\emph{dual root system} of $\Phi$; it has a system of simple roots $\ck \Pi:=\mset {\ck \a
\mid \a\in \Pi}$ with corresponding positive roots 
$\ck \Phi_{+}:=\mset{\ck \a\mid \a\in \Phi_+}$ and 
associated finite  Coxeter system $(W,S)$.
\subsection{Weyl groups} \label{crystrootsys} 
The root system $\Phi$ is said to be crystallographic
if for all $\a,\b\in \Phi$, one has $\mpair{\a,\ck\b}\in \Int$.  In that case, $W$ is a 
finite Weyl group and $\Phi$ (considered as a subset of its linear span) may be regarded as  a reduced root system in the sense of \cite[Ch VI]{Bour}.
%and one defines the  \emph{root lattice} $Q(\Phi):=\Int\Pi$ and \emph{weight 
%lattice}
%$P(\Phi):=\mset{\lambda\in V\mid \mpair{\lambda,\ck\Phi}\seq \Int}$.  The corresponding 
% lattices $Q(\ck\Phi)$  and $P(\ck\Phi)$ for $\ck \Phi$ are called the 
%  \emph{coroot lattice} and \emph{coweight lattice}  of $\Phi$  respectively. 
%  For consistency with the notation of \cite{DyLeRef}, denote the coweight lattice 
%  as $\O(\Phi):=P(\ck \Phi)$. Then $\O(\Phi)=\mset{u\in V\mid \mpair{u,\Phi}\subseteq \Int}$
  
\subsection{Reflection subgroups}\label{refsubgp} A subgroup $W'$  of $V$ generated by a subset  of $T$ 
is called a reflection subgroup.
It has a root system $\Phi_{W'}=\mset{\a\in \Phi\mid s_{\a}\in W'}$ and a 
  unique simple system $\Pi_{W'}\seq \Phi_+$; the corresponding positive 
  system $\Phi_{W',+}$ is $\Phi_{+}\cap \Phi_{W'}$.  One has $\Phi_{W'}=W'\Pi_{W'}$.
  
 The reflection subgroups $W_{I}:=\mpair{I}$ generated by subsets $I$ of $S$ are 
 called \emph{standard parabolic subgroups}  and their conjugates are called 
 \emph{parabolic subgroups}.  If $W'=W_{I}$, then
 $\Pi_{W'}=\mset{\a\in \Pi\mid s_{a}\in I}$ and $\Phi_{W'}=\Phi\cap \real_{\geq 0}\Pi_{W'}$.
 
 \subsection{Fundamental chamber for the $W$-action on $V$}\label{ss:fundcham}
 The subset  $\CC=\CC_{W}:=\mset{v\in V\mid \mpair{v,\Pi}\seq \real_{\geq 0}}$ of $V$ is 
 called the (closed) \emph{fundamental chamber} of $W$. For a reflection subgroup $W'$, we denote by $\mathcal{C}_{W'}$ the  fundamental chamber for $W$ with respect to simple roots $\Pi_{W'}$. In the following Lemma, we collect some
 standard facts concerning this situation, which may be found in \cite{Bour, StYale}.

 \begin{lemma}\label{lem:fundcham}\begin{num}\item  Every $W$ orbit on $V$ contains a unique point of $\CC$.
 \item For $v\in \CC$, the stabiliser $\stab_{W}(v):=\mset{w\in W\mid w(v)=v}$ satisfies  $\stab_{W}(v)=W_{I}$ where 
 $I:=\mset{s\in S\mid s(v)=v}=\mset{s_{\a}\mid \a\in \Pi \cap v^{\perp}}$.
%  \item For $v\in \CC$, the stabiliser $\stab_{W}(v):=\mset{w\in W\mid w(v)=v}$  is equal to 
% the standard parabolic subgroup $W_{I}$ where 
% $I:=\mset{s\in S\mid s(v)=v}=\mset{s_{\a}\mid \a\in \Pi \cap v^{\perp}}$.
%\item Any parabolic subgroup (defined as the pointwise stabiliser of a subset of $V$)
%is the stabiliser of a point of $V$, and hence is conjugate to a standard parabolic 
%subgroup $W_I$. 
%\item The intersection of any set of parabolic subgroups of $W$ is 
%a parabolic subgroup of $W$. In particular, we have 
%for $I,J\subseteq S$, $W_I\cap W_J=W_{I\cap J}$.
 \end{num}\end{lemma}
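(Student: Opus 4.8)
The plan is to prove (a) and (b) together, since the uniqueness statement in (a) and all of (b) are consequences of one induction. These are classical facts about finite reflection groups; I would assemble the argument from the standard properties of the length function of $(W,S)$ and its interaction with $\Phi$, as found in \cite{Bour,StYale}, and record it only for self-containedness.

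\emph{Existence in (a).} Since $\Pi$ is linearly independent, the open chamber $\mset{\delta\in V\mid\langle\delta,\alpha\rangle>0\text{ for all }\alpha\in\Pi}$ is non-empty; fix such a $\delta$ (e.g.\ $\delta=\rho:=\tfrac12\sum_{\beta\in\Phi_+}\beta$). Given $v\in V$, the orbit $Wv$ is finite, so choose $\mu\in Wv$ maximising $\langle\mu,\delta\rangle$. For $\alpha\in\Pi$, $s_\alpha\mu\in Wv$ and, since $s_\alpha$ is an orthogonal involution, $\langle s_\alpha\mu,\delta\rangle=\langle\mu,s_\alpha\delta\rangle=\langle\mu,\delta\rangle-\langle\delta,\ck\alpha\rangle\langle\mu,\alpha\rangle$. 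Maximality forces $\langle\delta,\ck\alpha\rangle\langle\mu,\alpha\rangle\ge0$, and $\langle\delta,\ck\alpha\rangle>0$, so $\langle\mu,\alpha\rangle\ge0$; hence $\mu\in\CC$ and every $W$-orbit meets $\CC$.

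\emph{The key claim.} If $v\in\CC$, $w\in W$ and $wv\in\CC$, then $wv=v$ and $w\in W_I$, where $I:=\mset{s\in S\mid s(v)=v}$. I would prove this by induction on $\ell(w)$; for $\ell(w)=0$ it is trivial. If $\ell(w)>0$ there is $\alpha\in\Pi$ with $\ell(s_\alpha w)=\ell(w)-1$, equivalently $w^{-1}\alpha\in\Phi_-$. Then $w^{-1}\alpha$ is a non-positive real combination of simple roots, so $\langle v,w^{-1}\alpha\rangle\le0$ as $v\in\CC$; on the other hand $\langle v,w^{-1}\alpha\rangle=\langle wv,\alpha\rangle\ge0$ as $wv\in\CC$. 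Hence $\langle wv,\alpha\rangle=0$, so $s_\alpha(wv)=wv$, i.e.\ $(s_\alpha w)v=wv\in\CC$. The inductive hypothesis applied to $s_\alpha w$ gives $(s_\alpha w)v=v$, so $wv=v$; also $\langle v,\alpha\rangle=\langle wv,\alpha\rangle=0$, so $s_\alpha\in I$, and since $s_\alpha w\in W_I$ by induction, $w=s_\alpha(s_\alpha w)\in W_I$.

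\emph{Conclusion and obstacle.} The claim gives uniqueness in (a) (apply it with $v,v'=wv\in\CC$, forcing $v'=v$) and the inclusion $\stab_W(v)\seq W_I$; the reverse inclusion $W_I\seq\stab_W(v)$ is immediate, and $s_\alpha(v)=v\iff\langle v,\ck\alpha\rangle=0\iff\alpha\in v^\perp$ identifies $I$ with $\mset{s_\alpha\mid\alpha\in\Pi\cap v^\perp}$, proving (b). The only real work is the induction step, and its substance is the standard input that for $w\neq1$ some simple $\alpha$ has $w^{-1}\alpha\in\Phi_-$ and $\ell(s_\alpha w)<\ell(w)$; a genuinely self-contained account would first have to build the length function of $(W,S)$ from the Coxeter presentation, which is precisely the classical theory being invoked, so I would simply cite it. Note that crystallographicity, irreducibility and reducedness are never used, so the lemma holds for the general root systems of \ref{rootsyst}.
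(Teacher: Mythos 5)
Your proof is correct and is exactly the standard argument (orbit representative by maximising against a regular vector, then induction on $\ell(w)$ using $\ell(s_\alpha w)<\ell(w)\iff w^{-1}\alpha\in\Phi_-$). The paper gives no proof at all here — it states the lemma as a collection of standard facts and cites \cite{Bour, StYale}, which is precisely where your argument comes from, so you have simply written out the proof the paper delegates to the references.
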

 
 It follows that if $\a\in \Phi$ is any root, then  the $W$-orbit $W\a$ contains 
 a unique element of $\CC_{W}$. A root  $\a\in \Phi\cap \CC_{W}$ is called 
 a  \emph{dominant root}.  If $\Phi$ is irreducible, there are at most two dominant roots
 (cf. \cite[Lemma 2]{DyLeRef}), and hence at most two $W$-orbits of
 roots.   If  $\Phi$ is crystallogrphic, the 
dominant  roots  are the highest long and short roots of the components 
of $\Phi$.

\section{Some reductions, and proof of \ref{prop:osh}(a) for dihedral groups}\label{s3}
This section provides  some reductions 
which are useful for the  proofs of Proposition \ref{prop:osh} and Lemma \ref{lem:oshorig}. 
These two results are then proved for dihedral reflection groups. 

\subsection{Reductions}\label{oshequiv} Suppose we are given positive scalars $d_{\g}$ for $\g\in \Phi$ such that $d_{\g}=d_{\g'}$ if 
$\g$ and $\g'$ are in the same $W$-orbit.  Since $s_{d_{\g}\g}=s_{\g}$, it follows that $\Psi:=\mset{d_{\g}\g\mid \g\in \Phi}$ is a 
root system for $(W,S)$, with $\mset{d_{\g}\g\mid \g\in \Pi}$ as a set of simple roots corresponding to $S$ and positive system 
$\Psi_{+}=\mset{d_{\g}\g\mid \g\in \Phi_{+}}$. We say that such a root system $\Psi$ is obtained by \emph{rescaling} $\Phi$. 
For example, the dual root system $\ck \Phi$ is obtained by rescaling $\Phi$. Any  root system of the finite reflection group
 $W$ on $V$ can be obtained from any other root system of $W$ by rescaling.

\begin{lemma}\label{lem:oshequiv}
\begin{num}
\item $\text{\rm Proposition \ref{prop:osh}(a)}$ implies $\text{\rm Lemma \ref{lem:oshorig}}$. 
\item For fixed  $(W,S,\Phi,\Pi)$, $\text{\rm Proposition \ref{prop:osh}(a)}$  is equivalent to 
$\text{\rm Proposition \ref{prop:osh}(b)}$.
\item $\text{\rm Proposition \ref{prop:osh}}$ follows from  the  special case of its part $\text{\rm (a)}$ in which  $\Phi$ is assumed to be  irreducible.
\item Let $\Psi=\set{d_{\g}\g\mid \g\in \Phi}$ be a rescaling of $\Phi$   as above. Then $\text{\rm Proposition \ref{prop:osh}(a)}$ holds for 
$\Phi$, $J$, and $\a$ if and only if it holds for $\Psi$, $J$ and $d_{\a}\a$.
\item If \text{ \rm Proposition \ref{prop:osh}(a)} holds for $(W,S,\Phi,\Pi)$, then for any $K\seq S$,  it holds for $(W_{K},K,\Phi_{K},\Pi_{K})$. 
 \end{num}
\end{lemma}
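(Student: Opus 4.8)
The plan is to prove the five parts in turn, using two elementary facts throughout: that for any reflection subgroup $W'$ of $W$ each $W'$-orbit on $V$ meets $\CC_{W'}$ in exactly one point (Lemma \ref{lem:fundcham}(a) applied to $W'$), and that $wv-v\in\real\Pi_{J}$ for all $v\in V$ and $w\in W_{J}$ (induction on the length of $w$, since $s_{\b}v-v\in\real\b$ for $\b\in\Pi$). For (a) --- that Proposition \ref{prop:osh}(a) implies Lemma \ref{lem:oshorig} --- I would take the data $X,\D,(c_{\b}),l$ of that lemma with $X\neq\eset$, set $J\seq S$ with $\Pi_{J}=\Pi\sm\D$, and fix $\g_{0}\in X$; since the $c_{\b}$ are not all zero, $\g_{0}\in\Phi\sm\Phi_{J}$. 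For arbitrary $\g\in X$, agreement of the root coefficients on $\D$ gives $\g-\g_{0}\in\real\Pi_{J}$, while $\g$ and $\g_{0}$ are roots of the common length $l$ in an irreducible crystallographic root system, hence lie in a single $W$-orbit (see \ref{ss:fundcham}); Proposition \ref{prop:osh}(a) applied to $\g_{0}$ then forces $\g\in W_{J}\g_{0}$. Conversely $W_{J}\g_{0}\seq X$, since $W_{J}$ preserves lengths and the coset $\g_{0}+\real\Pi_{J}$, so $X=W_{J}\g_{0}$; the same remarks show $X$ is $W_{J}$-stable, hence a union of $W_{J}$-orbits each meeting $\CC_{\Pi\sm\D}$ exactly once, which yields the equivalence with $\vert X\cap\CC_{\Pi\sm\D}\vert\le 1$.

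For (b), Proposition \ref{prop:osh}(a) trivially implies (b), as it puts $\a$ and $\b$ in a single $W_{J}$-orbit, which meets $\CC_{W_{J}}$ only once. For the converse, given $\a\in\Phi\sm\Phi_{J}$ the inclusion $W_{J}\a\seq W\a\cap(\a+\real\Pi_{J})$ is formal; for the reverse I would take $\g$ in the right-hand side (so that $\g\in\Phi\sm\Phi_{J}$ too), move $\a$ and $\g$ into $\CC_{W_{J}}$ by elements of $W_{J}$ to obtain $\a',\g'\in(\Phi\sm\Phi_{J})\cap\CC_{W_{J}}$ with $\g'\in W\a'\cap(\a'+\real\Pi_{J})$, and apply (b) to get $\a'=\g'$, whence $\g\in W_{J}\a$.

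For (c) I would first deduce Proposition \ref{prop:osh}(a) for arbitrary $\Phi$ from its irreducible case. Writing $\Phi$ as the disjoint union of its irreducible components $\Phi^{(i)}$, with simple systems $\Pi^{(i)}$ and reflection groups $W^{(i)}$ so that $W=\prod_{i}W^{(i)}$, each root $\a$ lies in a unique component $\Phi^{(i_{0})}\seq\real\Pi^{(i_{0})}$, and the remaining factors $W^{(i)}$ fix $\real\Pi^{(i_{0})}$ pointwise; hence $W\a=W^{(i_{0})}\a\seq\real\Pi^{(i_{0})}$, which forces any $\g\in W\a\cap(\a+\real\Pi_{J})$ into $\a+\real(\Pi_{J}\cap\Pi^{(i_{0})})$, so that Proposition \ref{prop:osh}(a) inside the irreducible $\Phi^{(i_{0})}$ gives the general case. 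With \ref{prop:osh}(a) in hand for all $\Phi$, part (b) of \ref{prop:osh} is part (b) of this lemma, and part (c) of \ref{prop:osh} follows by conjugation: a parabolic $W'=wW_{J}w^{-1}$ has root system $\Phi'=w\Phi_{J}$ with $\real\Phi'=w\real\Pi_{J}$, so applying $w^{-1}$ converts the assertion for $(W',\b)$ into \ref{prop:osh}(a) for $(W_{J},w^{-1}\b)$.

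For (d) I would use the $W$-equivariant bijection $\phi\colon\g\mapsto d_{\g}\g$ of $\Phi$ onto $\Psi$ ($d$ being constant on $W$-orbits): it carries $\Phi_{J}$ onto $\Psi_{J}$ and $\Pi_{J}$ onto a set spanning the same subspace $\real\Pi_{J}$, and on the orbit $W\a$ it is multiplication by the constant $d_{\a}$, so it sends $W\a\cap(\a+\real\Pi_{J})$ bijectively onto $W(d_{\a}\a)\cap(d_{\a}\a+\real\Pi_{J})$ and $W_{J}\a$ onto $W_{J}(d_{\a}\a)$; applying $\phi$ then carries one instance of \ref{prop:osh}(a) into the other. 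Part (e) is immediate: for $J\seq K$ one has $(W_{K})_{J}=W_{J}$, $(\Pi_{K})_{J}=\Pi_{J}$ and $(\Phi_{K})_{J}=\Phi_{J}$, so for $\a\in\Phi_{K}\sm\Phi_{J}$, intersecting the identity $W\a\cap(\a+\real\Pi_{J})=W_{J}\a$ of \ref{prop:osh}(a) for $\Phi$ with $W_{K}\a$ and using $W_{J}\a\seq W_{K}\a$ yields $W_{K}\a\cap(\a+\real\Pi_{J})=W_{J}\a$, which is \ref{prop:osh}(a) for $\Phi_{K}$. The only genuinely non-formal ingredient is the fact used in (a) that roots of equal length in an irreducible crystallographic root system lie in a single $W$-orbit; thereafter the main point requiring care is the component bookkeeping and the parabolic-to-standard-parabolic reduction in (c), everything else being formal manipulation with the two facts noted at the outset.
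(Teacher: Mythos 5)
Your proof is correct and takes essentially the same route as the paper's: the same use of Bourbaki-style facts for (a), the same chamber argument for (b), and the same conjugation step for Proposition \ref{prop:osh}(c). The only variation is in (c), where you reduce to the irreducible case via the product decomposition of $W$ and the fact that the other factors fix $\real\Pi^{(i_{0})}$ pointwise, while the paper instead invokes connectedness of root supports; you also spell out (d) and (e), which the paper leaves to the reader.
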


\begin{proof}  To prove (a), we assume $\text{\rm Proposition \ref{prop:osh}(a)}$ and make the assumptions of Lemma \ref{lem:oshorig}. 
Let $J\seq S$ with $\Pi_{J}=\Pi\sm \D$. Note that  for $\a\in \Phi$,  $\Phi\cap (\a+\real\Pi_{J})=\mset{\b\in \Phi\mid  
\b[\g]=\a[\g] \text{ \rm for all }\g\in \D}$. Also, \cite[Ch VI, \S 1, Proposition 11]
{Bour} implies that  $W\a=\mset{\g\in \Phi\mid \mpair{\g,\g}^
{1/2}=\mpair{\a,\a}^{1/2}}$.  Assume that $X\neq \eset$, say $\a\in X$.
Then the above implies that $X=W\a\cap (\a+\real\Pi_{J})$. By Proposition \ref{prop:osh}(a), $X=W_{J}\a$ is a single $W_{J}$-orbit, which proves (a).

Next, we prove (b). Let $J,\a,\b$ be as in  
Proposition \ref{prop:osh}(b).  If   Proposition \ref{prop:osh}(a) 
holds, it implies that $\b\in W_{J}\a$ and so $\b=\a$ since $\a,\b\in \CC_{W_{J}}$. Conversely, suppose that Proposition \ref{prop:osh}(b) holds for $(W,S,\Phi,\Pi)$ and let $J,\a$ be as in Proposition \ref{prop:osh}(a).
Obviously $W_{J}\a\seq W\a\cap (\a+\real\Pi_{J})$. To establish the reverse inclusion,  let $\b\in W\a\cap (\a+\real\Pi_{J})$. Choose $\a'\in W_{J}\a\cap \CC_{W_{J}}$ and $\b'\in
W_{J}\b\cap \CC_{W_{J}}$. Clearly, $\a',\b, \b'\in \a+\real\Pi_{J}$ and since $\a\not\in \Phi_{J}$, one has $\a',\b,\b'\in \Phi\sm \Phi_{J}$.  Proposition \ref{prop:osh}(b)  applied to $\a',\b'$ gives that $\a'=\b'$ and hence $\a\in W_{J}\a'=W_{J}\b'=W_{J}\b$. 

We next  show that Proposition \ref{prop:osh}(a) implies 
Proposition \ref{prop:osh}(c). Let $W'$, $\Phi$, $ \b$ be as in Proposition \ref{prop:osh}(c). Write 
$W'=wW_{J}w^{-1}$ where $w\in W$ and $J\seq S$, and  let $\a:=w^{-1}(\b)$. Then 
$\Phi'=w\Phi_{J}$ and so $\real \Phi'=w^{-1}\real \Phi_{J}=w^{-1}\real \Pi_{J}$
and $\a\in \Phi\sm  \Phi_{J}$.  Thus, using Proposition \ref{prop:osh}(a)
\begin{equation*}
W\b\cap (\b+\real\Phi')=Ww\a\cap (w\a+w\real \Pi_{J})=w\bigl(W\a\cap (\a+\real
\Pi_{J})=wW_{J}\a=W'\b\end{equation*} as required for Proposition \ref{prop:osh}(c). 
To complete the proof of (c), we show that  Proposition \ref{prop:osh}(a) follows from 
its special case in which $\Phi$ is irreducible.  Let $\a\in \Phi$.  It is known 
that $\supp(\a)$  is a non-empty connected subset  of $\Pi$, and it is therefore 
contained in some connected component $\G$ of $\Pi$. Let $K\seq S$ with $\Pi_{K}=\G$. 
Equivalently, $W_{K}$ is the unique component of $W$ containing $s_{\a}$. 
Then $\a\in \Phi_{K}$, $W\a=W_{K}\a$ and $W_{J}\a=W_{J\cap K}\a$. 
  Assume  that $\a\in \Phi\sm \Phi_{J}$. We claim that   $\Phi\cap(\a+\real \Pi_{J})=\Phi_{K}\cap (\a+\real\L\cap  \G) $. 
  Obviously, the right hand side is contained in the left. On the other hand, let $\beta\in \Phi\cap(\a+\real \Pi_{J})$. Since
   $\a\not\in \Phi_{J}$, we have   $\eset\neq \supp(\a)\sm \Pi_{J}= \supp(\b)\sm \Pi_{J}$ 
   and therefore $\b\in \Phi_{K}$ since $\supp(\b)$ is connected. Hence  $\b-\a\in\real \Pi_{K}\cap \real \Pi_{J}=\real\Pi_{K\cap J}$, 
   proving the claim.     If Proposition \ref{prop:osh}(a) holds when $W$ is irreducible, 
   we therefore have (by its validity for $W_{K}$)
  \begin{equation*} 
  W\a\cap (\a +\real \Pi_{J})=W_{K}\a\cap (\a +\real\L\cap  \G)=W_{J\cap K}\a=W_{J}\a 
  \end{equation*} 
  and Proposition \ref{prop:osh}(a) holds for arbitrary $W$.  Hence (c) is proved.
  
The straightforward verifications of (d) and  (e) are left to the reader.
\end{proof}

\subsection{Dihedral case} \label{oshdihed}  The remainder of the paper will focus on the proof of special cases of
Proposition \ref{prop:osh}(a) which together suffice to prove the result in general, by the preceding reductions. 
The following result disposes  of the case of  dihedral Coxeter systems $(W,S)$.   
\begin{lemma} \label{lem:oshdihed}
\begin{num}
\item Suppose that  $v\in V$, $\a\in \Phi$ and $w\in W$ with $wv\in v+\real \a$.
Then  $wv\in W'v$ where $W':=\mpair{s_{\a}}$.
\item  $\text{\rm Proposition \ref{prop:osh}(a)}$ holds if    $\vert S\vert \leq 2$.
 \end{num}
\end{lemma}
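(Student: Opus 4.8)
The plan is to prove the two parts of Lemma \ref{lem:oshdihed} in the order stated, using (b) as an almost-immediate consequence of (a).

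For part (a), the key observation is that the whole problem collapses onto a two-dimensional subspace. First I would dispose of the trivial case $wv = v$, where there is nothing to prove. Otherwise, set $u := wv - v$, so $u \in \real\a \setminus \{0\}$, and note that since $W$ acts by isometries, $\mpair{wv, wv} = \mpair{v,v}$; expanding $\mpair{v+u, v+u} = \mpair{v,v}$ gives $2\mpair{v,u} + \mpair{u,u} = 0$, i.e. $\mpair{v,u} = -\tfrac{1}{2}\mpair{u,u}$. The point of this is to compute $s_\a(v)$: since $u = c\a$ for some scalar $c \neq 0$, one has $\mpair{v, \ck\a} = \mpair{v,u}\cdot\frac{2}{c\mpair{\a,\a}}$, and $s_\a(v) = v - \mpair{v,\ck\a}\a = v - \frac{\mpair{v,u}}{c\mpair{\a,\a}}\cdot\frac{2\a}{1}\cdot\frac{1}{2}$... rather, more cleanly: $s_\a(v) = v - \frac{2\mpair{v,\a}}{\mpair{\a,\a}}\a$, and writing $u = c\a$ we get $\mpair{v,\a} = \mpair{v,u}/c = -\mpair{u,u}/(2c) = -c\mpair{\a,\a}/2$, so $s_\a(v) = v - \frac{2(-c\mpair{\a,\a}/2)}{\mpair{\a,\a}}\a = v + c\a = v + u = wv$. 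Hence $wv = s_\a(v) \in W'v$, as required.

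For part (b), I would invoke Lemma \ref{lem:oshequiv}(c) to reduce to the case $\Phi$ irreducible, so that $W$ is either trivial, of type $A_1$, or dihedral $I_2(m)$ with $|S| = 2$; in all of these $|\Phi|$ is small and $W$ acts on a space of dimension $\le 2$. Given $J \subseteq S$ and $\a \in \Phi \setminus \Phi_J$, I must show $W\a \cap (\a + \real\Pi_J) = W_J\a$. The inclusion $\supseteq$ is automatic since $W_J$ fixes the coset $\a + \real\Pi_J$ pointwise-as-a-coset (that is, $w\a - \a \in \real\Pi_J$ for $w \in W_J$) and $W_J\a \subseteq W\a$. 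For $\subseteq$: if $J = \emptyset$ then $\real\Pi_J = 0$ and the statement is $W\a \cap \{\a\} = \{\a\}$, trivial. If $J = S$ then $\a \in \Phi = \Phi_J$, contradicting $\a \notin \Phi_J$, so this case is vacuous. The remaining case is $|S| = 2$, $|J| = 1$, say $J = \{s_\b\}$ with $\Pi_J = \{\b\}$, so $\Phi_J = \{\pm\b\}$ and $W_J = \mpair{s_\b}$. Then for $\g \in W\a \cap (\a + \real\b)$, write $\g = w\a$ with $w \in W$ and $\g - \a \in \real\b$; apply part (a) with $v = \a$ and the root $\b$ to conclude $\g = w\a \in \mpair{s_\b}\a = W_J\a$. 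This completes (b).

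The main obstacle is essentially bookkeeping rather than genuine difficulty: one must be careful that part (a) is stated for an arbitrary vector $v \in V$ and an arbitrary root $\a$, with $w$ ranging over all of $W$ (not just $\mpair{s_\a}$), since that generality is exactly what makes it applicable in (b) where $w$ is only known to lie in the ambient $W$. The reduction in (b) to $|J| = 1$ relies on the two earlier trivial cases $J = \emptyset$ and $J = S$, and one should also double-check via Lemma \ref{lem:oshequiv}(c) that reducing to irreducible $\Phi$ is legitimate here, since a reducible rank-$2$ Coxeter group ($A_1 \times A_1$) is also covered but is handled identically, or indeed directly by part (a). The only computational point requiring care is the sign in the identity $s_\a(v) = v + (wv - v)$; I would present that calculation explicitly since it is the entire content of (a).
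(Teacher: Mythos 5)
Your proposal is correct and follows essentially the same route as the paper: part (a) is the Carter-style computation (write $wv=v+c\a$, use $\mpair{wv,wv}=\mpair{v,v}$ to deduce $c=-\mpair{v,\ck\a}$, hence $wv=s_{\a}v$), and part (b) is the same three-way case split on $J=\eset$, $J=S$ (vacuous), and $\vert J\vert=1$ (apply (a)). The only cosmetic difference is your optional appeal to Lemma \ref{lem:oshequiv}(c), which, as you note, is not actually needed.
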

\begin{proof}
The proof of (a) is from the proof of \cite[Lemma 2]{CarConj}.
Write $wv=v+c\a$ where $c\in \real$.  If $c=0$, then $wv=v\in W'v$. 
Otherwise, a simple computation starting from the fact  that $\mpair{wv,wv}=\mpair{v,v}$ shows that $c=-\mpair{v,\ck\a}$ and so $wv=s_{\a}v\in W'v$.

For (b), assume $\vert S\vert \leq 2$ and $J\seq S$.
In this case, the statement of Proposition \ref{prop:osh}(a) is 
vacuous if $J=S$, trivial if $J=\eset$,  and immediate from (a) if  $\vert J\vert =1$.
\end{proof}
\section{Proof of \ref{prop:osh}(a) for finite Weyl groups}\label{s4}
Assume in this subsection  that $\Phi$  is crystallographic. It is  a reduced root system 
(in its linear span $\real\Phi$) in the sense of \cite{Bour}, 
with simple system $\Pi$, positive system $\Phi_{+}$ and  Weyl group $W$. 
The arguments below leading to a proof of Proposition \ref{prop:osh} in this case use   basic properties of root  strings,  as   in \cite[Ch VI, \S1]{Bour}. 

 \begin{lemma} \label{lem:rootstring} Let $\a_{1},\ldots, \a_{n}$ be roots in $\Phi$, where $n>0$. For $I\seq L:=\set{1,\ldots, n}$, let $\a_{I}:=\sum_{i\in I}\a_{i}$. Assume that $\a_{I}\neq 0$ for all $I\seq L$ with $I\neq \eset$  and that $\a_{L}=\a_{1}+\ldots +\a_{n}\in \Phi$.
 \begin{num}
 \item There is a permutation $\s$ of $\set{1,\ldots, n}$ such that for each
 $i=1,\ldots, n$, one has $\a_{\s(1)}+\ldots+\a_{\s(i)}\in \Phi$. 
 \item Suppose that $n=3$. If $\a_{1}+\a_{2} \in \Phi$ but $\a_{2}+\a_{3}\not\in \Phi$, then
 $\a_{1}+\a_{3}\in \Phi$.
 \item Assume that $\a_{i}+\a_{j}\not\in \Phi$ for any $i,j$ with $2\leq i<j\leq n$.
 Then $\a_{I}\in \Phi$ for all $I\seq L$ with $1\in I$. 
 \end{num}
 \end{lemma}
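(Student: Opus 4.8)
The plan is to treat the three parts as a package, proving (a) and (b) first since (c) will follow from them by an induction on $n$. For part (a), I would argue by induction on $n$. The case $n=1$ is trivial. For the inductive step, I need to find some index $k$ with $\a_L - \a_k \in \Phi$ (equivalently $\a_{L\sm\{k\}}\in\Phi$); then the hypotheses of the lemma hold for the $(n-1)$-tuple obtained by deleting $\a_k$ (non-vanishing of all partial sums is inherited), and applying the inductive hypothesis to that tuple and then appending $\a_k$ at the end gives the desired $\s$. To find such a $k$: consider the root $\gamma:=\a_L$. Since $\sum_i \pair{\a_i,\ck\gamma} = \pair{\gamma,\ck\gamma} = 2 > 0$, some $\a_k$ has $\pair{\a_k,\ck\gamma} > 0$. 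Then by the standard property of root strings (\cite[Ch VI, \S1]{Bour}), since $\gamma$ and $\a_k$ are roots with $\pair{\a_k,\ck\gamma}>0$ and $\gamma\neq\a_k$ is not a multiple of $\a_k$ (as $\a_L-\a_k=\a_{L\sm\{k\}}\neq 0$ by hypothesis, and $\Phi$ is reduced), we get $\gamma - \a_k \in \Phi$. That is exactly $\a_{L\sm\{k\}}\in\Phi$, completing the induction.

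For part (b) with $n=3$: by hypothesis $\a_1+\a_2\in\Phi$, so $\{\a_1+\a_2,\a_3\}$ are two roots with $(\a_1+\a_2)+\a_3=\a_L\in\Phi$ and $\a_3\neq -(\a_1+\a_2)$ (since $\a_L\neq 0$); also $\a_3\neq \a_1+\a_2$ as $\Phi$ is reduced and $\a_L\ne 0$. I would look at the $\a_3$-string through $\a_L$ and through $\a_1+\a_2$. The clean way: since $\a_2,\a_3$ are roots with $\a_2+\a_3\notin\Phi$ and $\a_2+\a_3\ne 0$, the root-string property forces $\pair{\a_3,\ck{\a_2}}\ge 0$ — more precisely, if $p,q$ are the string endpoints of the $\a_3$-string through $\a_2$ then $p=0$, so $\pair{\a_2,\ck{\a_3}} = p - q = -q \le 0$, hence $\pair{\a_3,\ck{\a_2}}\le 0$. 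Meanwhile from $\a_1+\a_2\in\Phi$ and $\a_1+\a_2+\a_3\in\Phi$ one reads off information about $\pair{\a_3, \ck{(\a_1+\a_2)}}>0$; combined with $\pair{\a_3,\ck{\a_2}}\le 0$ this yields $\pair{\a_3,\ck{\a_1}}$ being sufficiently positive (after normalizing by the appropriate square-lengths) to conclude $\a_1+\a_3\in\Phi$ via the converse direction of the root-string criterion. I expect the length bookkeeping here — converting between $\pair{x,\ck y}$ for various $x,y$ of possibly different lengths — to be the fiddliest point, but in a rank-$2$ crystallographic situation the possible configurations are limited ($A_1\times A_1$, $A_2$, $B_2$, $G_2$) and can be checked directly if the uniform argument gets awkward.

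For part (c), I would induct on $n$. Fix $I\seq L$ with $1\in I$; I must show $\a_I\in\Phi$. If $I=L$ this is a hypothesis. Otherwise pick $j\in L\sm I$; it suffices to show $\a_{L\sm\{j\}}\in\Phi$ and then apply the inductive hypothesis to the tuple $(\a_i)_{i\in L\sm\{j\}}$, whose hypotheses are inherited (all partial sums nonzero; the sum over $L\sm\{j\}$ is a root by what we just showed; and the "no $\a_i+\a_k\in\Phi$ for $i<k$ both $\ge 2$" condition persists). To show $\a_{L\sm\{j\}}\in\Phi$: apply part (a) to get an ordering $\s$ with all initial partial sums in $\Phi$; then work down from $\a_L$ removing summands one at a time. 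The issue is that part (a) gives some ordering, not one ending in $\a_j$. This is where part (b) enters: if the $\s$-ordering ends $\ldots,\a_k,\a_j$ (or has $\a_j$ late), I want to "swap" $\a_j$ past $\a_k$ using (b) — i.e., knowing $\a_{P}\in\Phi$ and $\a_{P}+\a_k+\a_j\in\Phi$ where $P$ is the set of earlier indices, and if $\a_k+\a_j\notin\Phi$ then (b) applied to $\a_P^{agg},\a_k,\a_j$... but here I'd rather apply (b) with the three "roots" $\a_P^{\mathrm{sum}}, \a_k, \a_j$, noting $\a_P^{\mathrm{sum}}+\a_k\in\Phi$, to get $\a_P^{\mathrm{sum}}+\a_j\in\Phi$, i.e. $\a_{P\cup\{j\}}\in\Phi$ — provided $\a_k+\a_j\notin\Phi$. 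Since $1\in P$ (as $1\in I\subseteq$ the early part after reordering — I may need to also arrange via (a)-type argument that $1$ comes first, using the $k$ found with $\pair{\a_k,\ck{\a_L}}>0$ when convenient), and at least one of $k,j$ is $\ge 2$, the hypothesis of (c) does not immediately give $\a_k+\a_j\notin\Phi$ unless both are $\ge 2$. The honest main obstacle is precisely organizing this reordering/removal argument so that every application of (b) has $\a_k+\a_j\notin\Phi$ available — I anticipate handling it by removing, at each stage, an index $j\notin I$ for which $\a_{L\sm\{j\}}$ is a root directly via the $\pair{-,\ck{\a_L}}$ sign trick applied within the complement, falling back on (b) only to reshuffle so that such a $j$ exists; getting this bookkeeping airtight is the crux of the proof.
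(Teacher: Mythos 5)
Your part (a) is fine and is essentially the standard argument (the paper simply cites \cite[Ch VI, \S 1, Proposition 19]{Bour} for it). The genuine gap is in part (b), where your proposed direct argument is both sign-confused and directionally wrong. From $\a_{2}+\a_{3}\not\in\Phi\cup\set{0}$ the root-string property gives $\pair{\a_{2},\ck\a_{3}}\geq 0$ (your "more precise" derivation swaps the roles of $p$ and $q$ and lands on $\leq 0$); from $\a_{1}+\a_{2}\in\Phi$ and $\a_{1}+\a_{2}+\a_{3}\in\Phi$ you only learn that the upward $\a_{3}$-string through $\a_{1}+\a_{2}$ is nonempty, which bounds $\pair{\a_{1}+\a_{2},\ck\a_{3}}$ above by $p-1$ with $p$ unknown, not that $\pair{\a_{3},\ck{(\a_{1}+\a_{2})}}>0$; and in any case a \emph{positive} value of $\pair{\a_{3},\ck\a_{1}}$ would yield $\a_{3}-\a_{1}\in\Phi$, whereas $\a_{1}+\a_{3}\in\Phi$ requires $\pair{\a_{3},\ck\a_{1}}<0$. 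Your fallback of checking rank-two configurations also does not apply, since $\a_{1},\a_{2},\a_{3}$ generically span a rank-three subsystem. The missing idea is to argue by contradiction: assuming \emph{also} that $\a_{1}+\a_{3}\not\in\Phi$ gives the two further inequalities $\pair{\a_{3},\ck\a_{1}}\geq 0$ and $\pair{\a_{1}+\a_{2}+\a_{3},\ck\a_{2}}\leq 0$, which together with $\pair{\a_{3},\ck\a_{2}}\geq 0$ and $\pair{\a_{1}+\a_{2}+\a_{3},\ck\a_{1}}\leq 0$ force $\pair{\a_{1},\ck\a_{2}}\leq -2$ and $\pair{\a_{2},\ck\a_{1}}\leq -2$, hence $\pair{\a_{1}+\a_{2},\a_{1}+\a_{2}}\leq 0$, contradicting $\a_{1}+\a_{2}\neq 0$.

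For part (c) you correctly identify the crux --- guaranteeing $\a_{k}+\a_{j}\not\in\Phi$ at each application of (b) --- but leave it unresolved, and your proposed workaround (removing indices outside $I$ via a sign trick "applied within the complement") is not an argument. The resolution is simpler than your bookkeeping suggests: for any $\s$ as in (a), $\a_{\s(1)}+\a_{\s(2)}\in\Phi$, so hypothesis (c) forces $1\in\set{\s(1),\s(2)}$ and one may normalise $\s(1)=1$. Then every swap one ever needs is of two \emph{adjacent} entries $\a_{\s(i)},\a_{\s(i+1)}$ with $\s(i),\s(i+1)\geq 2$, for which the hypothesis supplies $\a_{\s(i)}+\a_{\s(i+1)}\not\in\Phi$ directly, and (b) applied to the triple $(\a_{\s(1)}+\cdots+\a_{\s(i-1)},\,\a_{\s(i)},\,\a_{\s(i+1)})$ shows the swapped ordering still satisfies (a). Since the adjacent transpositions of $\set{2,\ldots,n}$ generate its symmetric group, every ordering beginning with $1$ satisfies (a), and every $I$ with $1\in I$ is an initial segment of such an ordering. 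As written, your proposal establishes (a) but not (b) or (c).
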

 \begin{remark}
 (1) For $m\in \Nat$, let 
 $c_{m}:=\vert \mset{I\seq J\mid \a_{I}\in \Phi, \vert I\vert =m}\vert $.
 It is shown in \cite{PF} that  $c_{i}\geq n-i+1$ for $i=1,\ldots, n$
 (this  follows from (a)--(b) if $n=3$). The  case $c_{i}=n-i+1$ for all 
 $i$ occurs in type $A$.  
 
(2)  Part (b)  is the key point for the proofs given here 
of Proposition \ref{prop:osh}. In the (present) crystallographic case, it
has a standard proof using the well-known  formulae
  ($[\mathfrak{g}_{\a}, \mathfrak{g}_{\b}]=\mathfrak{g}_{\a+\b}$ for $\a,\b\in \Phi$ with $\a+\b\in \Phi$ and $[\mathfrak{g}_{\a}, \mathfrak{g}_{\b}]=0$ for $\a,\b\in \Phi$ with  $\a+\b\not\in \Phi\cup\set{0}$) for brackets of root spaces $\mathfrak {g}_{\a}$, where  $\a\in \Phi$, of a  
  semi-simple complex Lie algebra $\mathfrak{g}$ with root system $\Phi$. In fact, it follows by computing \begin{equation*} [\mathfrak{g}_{\a_{2}},[\mathfrak{g}_{\a_{3}},
  \mathfrak{g}_{\a_{1}}]]= [\mathfrak{g}_{\a_{3}},[\mathfrak{g}_{\a_{2}},\mathfrak{g}_{\a_{1}}]]= [\mathfrak{g}_{\a_{3}},\mathfrak{g}_{\a_{1+}\a_{2}}]=\mathfrak{g}_{\a_{1}+\a_{2}+\a_{3}}\neq 0
  \end{equation*} using these formulae and the Jacobi identity. 
  We give another proof below.
  \end{remark}
  
\begin{proof} It is well known that part (a) may be  proved in the same way as its special case \cite[Ch VI, \S 1, Proposition 19]{Bour}
where all $\a_{i}\in \Phi_{+}$.  
      A proof of (b)   in terms of root strings is as follows.  
   Assume to the contrary that  $a_{1}+\a_{2}+\a_{3}\in \Phi$, $\a_{1}+\a_{2}\in \Phi$,
   but $\a_{2}+\a_{3}\not\in \Phi$ and $\a_{1}+\a_{3}\not\in \Phi$.
   Since $\a_{1},\a_{3}\in \Phi$ but $\a_{1}+\a_{3}\not\in \Phi\cup\set{0}$, one 
   gets $\mpair{\a_{3},\ck\a_{1}}\geq 0$  from \cite[Ch VI, \S 1, Corollary of Theorem 1]{Bour}.
   Similarly, one has $\mpair{\a_{3},\ck\a_{2}}\geq 0$ (from $\a_{2}+\a_{3}\not\in \Phi\cup\set
   {0}$),
   $\mpair{\a_{1}+\a_{2}+\a_{3},\ck \a_{1}}\leq 0$ (from $(\a_{1}+\a_{2}+\a_{3})-\a_{1}\not\in 
   \Phi\cup\set{0}$) and  $\mpair{\a_{1}+\a_{2}+\a_{3},\ck \a_{2}}\leq 0$ (from $(\a_{1}+\a_{2}+
   \a_{3})-\a_{2}\not\in \Phi\cup\set{0}$). Together, these four formulae give
   $\mpair{\a_{2},\ck\a_{1}}\leq -2$ and  $\mpair{\a_{1},\ck\a_{2}}\leq -2$. In turn, these equations  imply $\mpair{\alpha_{1}+\alpha_{2}, \alpha_{1}+\alpha_{2}}\leq 0$. Since $\mpair{-,-}$ is positive definite, this forces $\alpha_{1}+\alpha_{2}=0$, which contradicts the assumptions.
   
   To prove (c), assume $n\geq 2$ and  choose $\s$ as  in (a).  We use cycle notation for 
   permutations (two-cycles) of $\set{1,\ldots, n}$ below. Note that (a) also holds with $\s$ 
   replaced by 
   $\s':= \s(1,2)$.    By the assumption in (c), one  must  have 
   $1\in \set{\s(1),\s(2)}$ since $\s(1)+\s(2)\in\Phi$. Replacing  $\s$ by $\s'$ if necessary, 
   assume without loss of generality that $\s(1)=1$.  Now for any $i$ with $2\leq i<n$,  let $\b_{i-1}:=\a_{\s(1)}+\ldots+\a_{\s(i-1)}$. Then $\b_{i-1}+\a_{\s(i)}\in \Phi$,  $\b_{i-1}+\a_{\s(i)}
   +\a_{\s(i+1)}\in \Phi$ and 
   $\a_{\s(i)}+\a_{\s(i+1)}\not\in \Phi$. By (b),  $\b_{i-1}+\a_{\s(i+1)}=\a_{\s(1)}+\ldots+\a_{\s(i-1)}+
   \a_{\s(i+1)}\in \Phi$.  This shows that if $\s$ satisfies (a) and  $\s(1)=1$,  then, for each $i$ with $2\leq i<n$, (a) also holds 
   with 
   $\s$ replaced by $\s(i,i+1)$.  Since the adjacent  two-cycles $(i,i+1)$ with $2\leq i< n$ generate the symmetric group on $\set{2,\ldots, n}$,  (a) holds for all 
   permutations $\s$ of  $\set{1,\ldots, n}$ which fix $1$. Now given $I$ as in (c), there is 
   some such  permutation $\s$  and some $j$ with $1\leq j\leq  n$  such that 
   $I=\set{\s(1),\ldots, \s(j)}$, and (c) follows.
      \end{proof} 
      \subsection{}\label{rootperm}  For $J\seq S$, let $\leq_{J}$ 
      denote  the partial order on $V$ such that   $\a\leq_{J} \b$ if $\b-\a\in \Int_{\geq 0}\Pi_{J}$. 
          \begin{lemma}\label{lem:rootperm}
      \begin{num}
      \item If  $\a,\b\in \Phi$ with $\b\in \a+ \real \Pi_{J}$, then there exists $w\in W_{J}$ such that $w(\b)\geq_{J }w(\a)$.  
      \item Let $\a,\b\in \Phi\sm\Phi_{J}$ with $\a\leq_{J} \b$.
      Then one may write $\b-\a=\sum_{i=2}^{n}\alpha_{i}$ with  $\alpha_{i}\in \Phi_{J,+}$  for $i=2,\ldots, n$ and $n\geq 1$ minimal.  Set $\a_{1}:=\a$. Then $\a_{I}\in \Phi$ for all $I\seq L:=\set{1,\ldots, n}$ with $1\in I$.
       \end{num}
      \end{lemma}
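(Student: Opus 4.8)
The plan is to treat the two parts in sequence, using the root-string technology of Lemma~\ref{lem:rootstring}. For part~(a), I would argue as follows. Since $\b-\a\in\real\Pi_J$, write $\b-\a=\sum_{s\in J}c_s\a_s$ with $c_s\in\real$. If $\b-\a\in\Int_{\ge0}\Pi_J$ there is nothing to do (take $w=1$). Otherwise I want to apply an element of $W_J$ to make the difference dominant in the $\le_J$-sense. The natural move: among all pairs $(w(\a),w(\b))$ with $w\in W_J$, pick one for which $w(\b)-w(\a)$, restricted to the root lattice $\Int\Pi_J$, is $\le_J$-maximal --- but $\le_J$ need not be a total order, so instead pick $w$ so that the height $\sum_s d_s$ of $w(\b)-w(\a)=\sum_s d_s\a_s$ (with respect to $\Pi_J$) is \emph{maximal}. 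Note that for $t\in J$, $s_t$ sends $w(\b)-w(\a)$ to $w(\b)-w(\a)-\langle w(\b)-w(\a),\ck{\a_t}\rangle\a_t$; maximality of the height forces $\langle w(\b)-w(\a),\ck{\a_t}\rangle\ge0$ for every $t\in J$, i.e.\ $w(\b)-w(\a)$ lies in the dominant cone for $W_J$ inside $\real\Pi_J$. A standard fact (the dominant cone is contained in $\real_{\ge0}\Pi_J$, indeed in $\Int_{\ge0}\Pi_J$ once one knows the difference lies in the root lattice) then gives $w(\b)-w(\a)\in\Int_{\ge0}\Pi_J$, i.e.\ $w(\b)\ge_J w(\a)$, which is~(a). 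I should check that $w(\b)-w(\a)$ does lie in $\Int\Pi_J$: this holds because $\b,\a$ differ by an element of $\real\Pi_J$ and, crystallographically, $s_t(\a)-\a\in\Int\Pi_J$ for $t\in J$, so the difference stays in $\Int\Pi_J$ under $W_J$.

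For part~(b), I would proceed as follows. Given $\a\le_J\b$ with $\a,\b\in\Phi\sm\Phi_J$, the difference $\b-\a\in\Int_{\ge0}\Pi_J=\Int_{\ge0}\Phi_{J,+}$, so it can be written as a sum of positive roots of $\Phi_J$; choose such an expression $\b-\a=\sum_{i=2}^n\a_i$, $\a_i\in\Phi_{J,+}$, with $n\ge1$ \emph{minimal}. Set $\a_1:=\a$, $L=\{1,\dots,n\}$, and $\a_I=\sum_{i\in I}\a_i$. I want to invoke Lemma~\ref{lem:rootstring}(c), which requires: (i) $\a_I\ne0$ for all $\eset\ne I\seq L$; (ii) $\a_L=\b\in\Phi$; and (iii) $\a_i+\a_j\notin\Phi$ for all $2\le i<j\le n$. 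Condition~(ii) is immediate since $\a_L=\a_1+(\b-\a)=\b\in\Phi$. Condition~(iii) is exactly where minimality enters: if $\a_i+\a_j\in\Phi$ for some $2\le i<j\le n$, then since $\a_i,\a_j\in\Phi_{J,+}$ their sum is in $\Phi_{J,+}$ too (it is a positive combination of $\Pi_J$ lying in $\Phi$, hence in $\Phi_{J,+}$), and we could replace the two summands $\a_i,\a_j$ by the single summand $\a_i+\a_j$, contradicting minimality of $n$. Condition~(i): for $I$ with $1\notin I$, $\a_I$ is a nonzero sum of elements of $\Phi_{J,+}$, hence nonzero (all coordinates in $\Int_{\ge0}\Pi_J$, not all zero); for $I$ with $1\in I$, $\a_I=\a+\sum_{i\in I\sm\{1\}}\a_i$, and since $\a\notin\Phi_J$, $\supp(\a)\not\seq J$, so $\a_I$ has a nonzero coordinate on some simple root outside $\Pi_J$ and is therefore nonzero. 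With (i)--(iii) verified, Lemma~\ref{lem:rootstring}(c) yields $\a_I\in\Phi$ for all $I\seq L$ with $1\in I$, which is precisely~(b).

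The main obstacle, I expect, is part~(a): specifically, being careful that the "make the difference dominant" argument is valid when $\le_J$ is only a partial order, and that the dominant element of the cone, landing a priori only in $\real_{\ge0}\Pi_J$, is actually in the root lattice $\Int_{\ge0}\Pi_J$ --- this is where crystallographicity of $\Phi$ (the standing assumption of Section~\ref{s4}) is genuinely used, via the fact that $W_J$ preserves $\Int\Pi_J$ and that a dominant weight in $\real_{\ge0}\Pi_J\cap\Int\Pi_J$ lies in $\Int_{\ge0}\Pi_J$ for a (reduced) root system. By contrast, part~(b) is a clean bookkeeping argument once part~(a) and Lemma~\ref{lem:rootstring}(c) are in hand, the only subtlety being the use of minimality of $n$ to force condition~(iii) and the use of $\a\notin\Phi_J$ to force the nonvanishing of $\a_I$ for $I\ni1$.
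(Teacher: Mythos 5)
Your proof is correct and follows essentially the same route as the paper: in (a) you produce a $W_J$-dominant representative of $w(\b-\a)$ and intersect $\real_{\ge 0}\Pi_J$ with the root lattice, and in (b) you verify the hypotheses of Lemma~\ref{lem:rootstring}(c) for a minimal-length expression of $\b-\a$ as a sum of roots of $\Phi_{J,+}$. The only cosmetic difference is that your height-maximization argument in (a) simply inlines the standard fact (Lemma~\ref{lem:fundcham}(a)) that every $W_J$-orbit meets $\CC_{W_J}$, which the paper invokes directly.
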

      \begin{proof}
      For (a), choose $w\in W_{J}$ so $w(\b-\a)\in \CC_{W_{J}}$.
      Hence $w(\b-\a)\in \CC_{W_{J}}\cap \real \Pi_{J}$. By \cite[Ch V, \S 3,  Lemma 6(i)]{Bour}, one has  $\CC_{W_{J}}\cap \real \Pi_{J}\seq \real_{\geq 0}\Pi_{J}$. Therefore, 
      $w(\b-\a)\in \real_{\geq 0}\Pi_{J}\cap  \Int \Pi=\Int_{\geq 0}\Pi_{J}$  as required. 
      
   For (b), note first that  there exist  expressions  $\b-\a=\sum_{i=2}^{n}\alpha_{i}$ with each $\alpha_{i}\in \Phi_{J,+}$ (since,  using $\b-\a\in \Int_{\geq 0}\Pi_{J}$, they exist even with each $\a_{i}\in \Pi_{J}$). Fix such an expression with  $\alpha_{i}\in \Phi_{J,+}$ and $n$ minimal.   It will suffice to verify  that  $\a_{1},\ldots \a_{n}$ satisfy the hypotheses of Lemma  \ref{lem:rootstring}(c). One has $\a_{L}=\a_{1}+(\a_{2}+\ldots+\a_{n})=\a+(\b-\a)=\b\in \Phi$. Let $\eset \neq I\seq L$.  If $1\in I$, then  $\a_{I}\in \a_{1}+\real \Pi_{J}=\a+\real\Pi_{J}$ 
        and thus $\a_{I}\neq 0$, since  $\a \in\Phi\sm \Phi_{J}$ implies $\a\not\in \real\Pi_{J}$.
        If $1\not\in I$, then $\a_{I}\neq 0$ by minimality of $n$ (for instance). Finally, if $2\leq i<j\leq n$, then $\a_{i}+\a_{j}\not \in \Phi_{J}$ by minimality of $n$, and hence $\a_{i}+\a_{j}\not\in \Phi$
        since $\a_{i}+\a_{j}\in \real\Pi_{J}$ and $\real\Pi_{J}\cap \Phi=\Phi_{J}$.
       \end{proof}
       \subsection{Proof of \ref{prop:osh}(a) for  Weyl groups}\label{oshWeyl} 
 By rescaling if necessary, we may assume $\Phi$ is crystallographic. We may also assume that $W$ is irreducible. Then by
        \cite[Ch VI, \S 1, Proposition 11]
{Bour}, $\Phi$ has at most two root lengths. Rescaling again to replace $\Phi$ by $\ck \Phi$ if necessary, we may assume that $\a$ is a long root.
To simplify the argument, we assume using Lemma \ref{lem:oshdihed} that $\Phi$ is not of type $G_{2}$ and  multiply the inner product on $V$ by a positive 
scalar (or do a further rescaling) if necessary   so that the maximal squared length of a root in $\Phi$ is $\mpair{\a,\a}=4$. Roots of squared length $4$ are 
here called long roots;
other roots (if there are any) are called short roots, and ( cf. \cite{Bour}) they have squared length $2$.  The roots of any fixed length form a single $W$-orbit.

Let $\a\in \Phi\sm \Phi_{J}$ and $\b\in W\a\cap (\a+\real\Pi_{J})$ i.e. 
$\b\in \Phi\cap (\a+\real\Pi_{J})$ with $\mpair{\b,\b}=4$. We have to show
$\b\in W_{J}\a$. Let $w\in W_{J}$ be as in Lemma \ref{lem:rootperm}(a). It will suffice to show that $w(\b)\in W_{J}w(\a)$. Replacing $\a$ by $w(\a)$ and $\b$ by $w(\b)$, we assume without loss of generality that $\a\leq_{J}\b$.
Let $\a_{1},\ldots,\a_{n}\in \Phi_{J,+}$ be as in Lemma \ref{lem:rootperm}(b),
so $\a_{I}\in \Phi\cap (\a+\real\Pi_{J})$  for all $I\seq L$ with $1\in I$.
If $n\leq 2$, then $\b\in W_{J}\a$ by Lemma \ref{lem:oshdihed}. We assume that $n\geq 3$ and   prove that $\b\in W_{J}\a$ by induction on $n$. 
If  $\a_{I}\in \Phi$ is a long root for some $I$ with $1\in I\sneq L$, then by induction,  $\a$ is $W_{J}$-conjugate to $\a_{I}$ and $\a_{I}$ is $W_{J}$-conjugate to $\b$, so $\b$ is $W_{J}$-conjugate to $\a$.
Hence we may assume that  all $\a_{I}$ with $1\in I\sneq L$ are short roots. 
(In particular, this completes the  argument  if $\Phi$ is simply laced, since there are no short roots in that case).  

Note that, writing $I':=I\sm\set{1}$ for $I\seq L$ with $1\in I$, one has 
\begin{equation}\label{eq:lengths}
\mpair{\a_{I},\a_{I}}=\mpair{\a_{1},\a_{1}}+\sum_{j\in I'}\mpair{\a_{j},\a_{j}}+2\sum_{j\in I'}\mpair{\a_{1},\a_{j}}+2\sum_{i,j\in I':i<j}\mpair{\a_{i},\a_{j}}
\end{equation}
First, take $I=\set{1,i}$  in this where $2\leq i\leq n$. It gives $2=4+\mpair{\a_{i},\a_{i}}+2\mpair{\a_{1},\a_{i}}$.  If $\mpair{\a_{i},\a_{i}}=4$,  then $\mpair{\a_{1},\ck\a_{i}}\not\in \Int$, a contradiction. Hence $\mpair{\a_{i},\a_{i}}=2$ and $\mpair{\a_{1},\a_{i}}=-2$.  Hence each $\a_{i}$ with $2\leq i\leq n$ is a short root, 
and \eqref{eq:lengths} simplifies (for general $I\ni 1$) to 
\begin{equation}\label{eq:lengths2}
\mpair{\a_{I},\a_{I}}=4-2\vert I'\vert +2\sum_{i,j\in I':i<j}\mpair{\a_{i},\a_{j}}.
\end{equation}
Now take $I=\set{1,2,3}$ in this. One gets $\mpair{\a_{I},\a_{I}}=2\mpair{\a_{2},\a_{3}}$. Suppose first that $\mpair{\a_{I},\a_{I}}=4$ 
i.e. $\a_{I}$ is long. This forces $\a_{I}=\b$, so $I=\set{1,2,\ldots, n}$ (i.e $n=3$). It also gives   $\mpair{\a_{2},\a_{3}}=2$  and therefore $\a_{2}=\a_{3}$ since $\a_{2}$ and $\a_{3}$ are short.
In this case, we get \begin{equation*}
\b=\a_{I}=\a_{1}+\a_{2}+\a_{3}=\a_{1}+2\a_{2}=s_{\a_{2}}(\a_{1})\in W_{J}(\a)
\end{equation*} as required. The other possibility is that $\mpair{\a_{I},\a_{I}}=2$  i.e. $\a_{I}$ is short.  Then $\b\neq \a_{I}$ so $n\geq 4$, and 
 $\mpair{\a_{2},\a_{3}}=1$. One has $s_{\a_{2}}(\a_{3})=\a_{3}-\a_{2}\in \Phi_{J}$, so one of $\pm s_{\a_{2}}(\a_{3})$ is in $\Phi_{J,+}$. Interchanging 
 $\a_{2}$ and $\a_{3}$ if necessary, suppose without loss of generality that $\a_{3}':=\a_{3}-\a_{2}\in \Phi_{J,+}$.
Then  $\a_{2}':=s_{\a_{2}}(\a_{1})=\a_{1}+2\a_{2}\in W_{J}\a$.
Also, $\b=\a_{1}+\a_{2}+\a_{3}+\ldots +\a_{n}=\a_{2}'+\a_{3}'+\a_{4}+\ldots +\a_{n}$ where $\a_{3}',\a_{4},\ldots,\a_{n}\in \Phi_{J,+}$. Note  that $\a_{2}'\in \Phi\sm \Phi_{J}$ has
 square length  $\mpair{\a_{1},\a_{1}}=\mpair{\b,\b}$. By induction, 
 $\b\in W_{J}\a_{2}'=W_{J}\a$ as required. This completes the proof of Proposition \ref{prop:osh}(a)
 for crystallographic $W$.

%\section{Oshima's Lemma-the non-crystallographic case}\label{sec:osh-nc}
%In this section we shall complete the proof of Proposition \ref{prop:osh}.
\section{Proof of \ref{prop:osh} in the remaining cases (types $H_{3}$ and $H_{4}$)}
\label{s5}
The arguments  above suffice to establish 
Proposition \ref{prop:osh} provided that $(W,S)$ has no irreducible 
components of type $H_{3}$ or $H_{4}$.
To complete the proof in general, it is enough to establish  that it holds if $(W,S)$ is of type $H_{4}$ or $H_{3}$
 (in fact, by Lemma \ref{lem:oshequiv}(e), the case $H_{4}$ suffices). 
We do this using a suitable  embedding of $W$ as a subgroup of a Weyl group of   simply laced ($A$, $D$, $E$) type, as discussed in detail in \cite[\S1]
{DyEmb1} for  the case of $H_{4}$ in $E_{8}$.

\subsection{The non-crystallographic case-preparation} \label{noncrystsetup}  
  Define the number ring 
 $R:=\Int[\tau]\seq \rat(\sqrt 5)\seq \real$
  where $\tau:=\frac{1+\sqrt{5}}{2}$ denotes the golden ratio, so $\tau^{2}=\tau+1$.
Let $\Phi$   be a finite root system for a Coxeter group $W$  such that 
$\mpair{\a,\a}=2$ for all $\a\in \Pi$ and $\mpair{\a,\b}\in \set{0,-1,-\tau}$ for all distinct $\a,\b\in \Pi$. Any finite Coxeter  group $W$ such that the off diagonal entries of its Coxeter matrix are all $2$, $3$ or $5$ has such a root system, in which, for distinct $\a,\b\in \Pi$, $s_{\a}s_{\b}$ has order $2$, $3$ or $5$ according as whether $\mpair{\a,\b}$ is equal to $0$, $-1$ or $-\tau$.
Any root  system  of type $H_{4}$ or $H_{3}$ can be rescaled if necessary to be this form. 

  Let   $U:=R\Pi$ be the free $R$-submodule  of $V$ 
with $R$-basis $\Pi$. One has $\Phi\seq U$. Regard $W$ as 
a  group acting faithfully on $U$, generated by $R$-linear  reflections  $s_{\a}\colon U\to U$ for $\a\in \Phi$ (or just in $\Pi$) given by
 $v\mapsto v-\mpair{v,\a}\a$ for $v\in U$, where $\mpair{-,-}$ is now 
 regarded as a symmetric $R$-bilinear map $U\times U\to R$.     

Define a $\Int$-linear map $\th\colon R\to \Int$ by $\th(a+b\tau)=a$ for 
$a,b\in \Int$ and a symmetric  $\Int$-bilinear form 
$(-,-)\colon U\times U\to \Int$ by $(\a,\b)=\th(\mpair{\a,\b})$. Then as in \cite[\S1]{DyEmb1}, 
$\D:=\Pi\dotcup \tau\Pi$ is a simple system for a simply laced  finite crystallographic root system 
$\Psi:=\Phi\dotcup \tau\Phi$, with $(\a,\a)=2$ for all $\a\in \Phi$ and with $\Int$-linear  reflections 
$r_{\a}\colon U\to U$ given by $v\mapsto v- (v,\a)\a$ for $\a\in \Psi$ (note $U$ is the root lattice of $\Psi$). 
Denote the corresponding finite Coxeter system as $(W',S')$.
For $\a\in \Phi$, one has $s_{\a}=r_{\a}r_{\t\a}=r_{\t\a}r_{\a}$.  In particular, $W\seq W'$. The inclusion map $W\to W'$ is length doubling:
for $w\in W$, $l'(w)=2l(w)$ where $l'$ (resp., $l$) is the length function of $(W',S')$ (resp., $(W,S)$).  
\subsection{}\label{noncryst} A set of roots of $\Psi$ of the form $\set{\a,\t\a}$ where $\a\in \Phi$ will be called  a \emph{bundle}. For the proof of Proposition \ref{prop:osh} in the remaining cases, we need only (b) from the following, but the other parts are also  of  interest.
\begin{prop}\label{prop:noncryst}
\begin{num}
\item   The map 
$\wh \varphi\colon \wp(\Phi)\to \wp(\Psi)$ of power sets given by 
$\G\mapsto \G\dotcup \tau\G$ restricts to  a $W$-equivariant bijection
$\varphi$  from the set of simple subsystems of $\Phi$ to the set  of 
simple  subsystems    of $\Psi$ which are unions of bundles. 
\item  If $\G$ is a simple subsystem of $\Phi$, $\G':=\varphi(\G)$ and
$\a\in \Phi$, then $\a\in  \CC_{W_{\G}}$ if and only if $\t\a\in
 \CC_{W'_{\G'}}$.
\item If $\G$ is a simple subsystem of $\Phi$ and $\G':=\varphi(\G)$,  the Coxeter graph of $\G'$ is obtained by replacing each component of the Coxeter graph of $\G$ of type $H_{4}$ (resp., $H_{3}$, $I_{2}(5)$) by one of type $E_{8}$, $D_{6}$, $A_{4}$) and replacing each simply laced component of $(W,S)$ by two components of the same type.
\item The map
$\varphi$ induces a $W$-equivariant  bijection $\varphi'$ from the set of root subsystems of $\Phi$ to the set of root subsystems of $\Psi$ which are unions of bundles, with $\varphi'(\L)=\L\dotcup \tau\L$. 
\end{num}
\end{prop}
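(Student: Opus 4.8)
Here is the plan I would follow; it proves the four parts together, with (a) as the structural core, (b)--(c) by explicit computation of the integral form on $\G\dotcup\t\G$, and (d) deduced from (c).

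\smallskip

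First I would record some elementary facts used throughout. For $\g,\g'\in\Phi$ write $\mpair{\g,\g'}=a+b\t$ with $a,b\in\bbZ$ (legitimate since $\Phi\seq U=R\Pi$ and the form takes values in $R$ on $U$); then, using $\t^{2}=\t+1$, one gets $(\g,\g')=\th(\mpair{\g,\g'})=a$, $(\t\g,\g')=(\g,\t\g')=b$ and $(\t\g,\t\g')=a+b$. In particular $(\g,\t\g)=\th(2\t)=0$, so the two roots of a bundle are $(\cdot,\cdot)$-orthogonal. Also every $w\in W$ acts $R$-linearly on $U$, hence commutes with multiplication by $\t$, so $\wh\varphi(w\G)=w\,\wh\varphi(\G)$ for all $\G\seq\Phi$; this will give the $W$-equivariance in (a) and (d). The key finiteness input is: \emph{for any simple subsystem $\G$ of $\Phi$ and distinct $\g,\g'\in\G$ one has $\mpair{\g,\g'}\in\set{0,-1,-\t}$} (and an arbitrary inner product of two roots lies in $\set{0,\pm1,\pm\t,\pm(\t-1),\pm2}$). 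Indeed $\mpair{\g,\g'}=-2\cos(\pi/m)$, where $m$ is the order of $s_{\g}s_{\g'}$; since this value lies in $R\seq\rat(\sqrt5)$, degree considerations force $m\le 6$, and $m=4,6$ are excluded (they give $-\sqrt2,-\sqrt3\notin\rat(\sqrt5)$), leaving $m\in\set{2,3,5}$. I will also use the standard fact that in a finite root system a set of roots is a simple subsystem (a root basis of the reflection subgroup it generates) precisely when it is linearly independent with pairwise non-positive inner products.

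\smallskip

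For (a): given a simple subsystem $\G$ of $\Phi$, substituting $\mpair{\g,\g'}\in\set{0,-1,-\t}$ into the formulas above shows every pair from $\G\dotcup\t\G$ has $(\cdot,\cdot)$-value in $\set{0,-1}$, hence $\le 0$; a short Galois-conjugation argument (the conjugate set $\s(\G)$ is also real-linearly independent, and $\t,\bar\t$ are distinct) shows $\G\dotcup\t\G$ is $\bbZ$-linearly independent in $U$. So $\wh\varphi(\G)=\G\dotcup\t\G$ is a simple subsystem of $\Psi$, and it is a union of bundles by construction. Injectivity is immediate from $\G=\wh\varphi(\G)\cap\Phi$. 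For surjectivity, a simple subsystem $\D'$ of $\Psi$ that is a union of bundles has the form $\L\dotcup\t\L$ with $\L=\D'\cap\Phi$; for distinct $\l,\l'\in\L$ the inequalities $(\l,\l')\le 0$ and $(\l,\t\l')\le 0$ say exactly $a\le 0$ and $b\le 0$, where $\mpair{\l,\l'}=a+b\t$, whence $\mpair{\l,\l'}\le0$, so $\L$ is a simple subsystem of $\Phi$ with $\wh\varphi(\L)=\D'$. Equivariance is the remark above.

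\smallskip

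For (b) and (c): with $\G'=\G\dotcup\t\G$, the condition $\t\a\in\CC_{W'_{\G'}}$ means $(\t\a,\g)\ge0$ and $(\t\a,\t\g)\ge0$ for all $\g\in\G$, i.e.\ $b\ge0$ and $a+b\ge0$ where $\mpair{\a,\g}=a+b\t$, while $\a\in\CC_{W_{\G}}$ means $a+b\t\ge0$ for all $\g$; so (b) reduces to the equivalence $a+b\t\ge0\iff(b\ge0\text{ and }a+b\ge0)$, checked on the finite list of realizable values $a+b\t\in\set{0,\pm1,\pm\t,\pm(\t-1),\pm2}$ (the restriction to these values is essential: e.g.\ $2-\t>0$ has ``$b=-1$'', but $2-\t$ is not of the form $2\cos(k\pi/m)$ and so never occurs). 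For (c): by the formulas all $(\cdot,\cdot)$-values on $\G'$ lie in $\set{0,-1}$, so $\G'$ is simply laced with an edge between $r_{\d},r_{\d'}$ iff $(\d,\d')=-1$; reading off the three formulas with $\mpair{\g,\g'}\in\set{0,-1,-\t}$, an order-$3$ bond $\set{\g,\g'}$ of $\G$ contributes exactly the two edges $\set{r_{\g},r_{\g'}}$ and $\set{r_{\t\g},r_{\t\g'}}$, an order-$5$ bond contributes a path through $r_{\g},r_{\t\g'},r_{\t\g},r_{\g'}$ in that order, and no bundle ever gives an edge. Assembling this over the components of the Coxeter graph of $\G$ (each of type $A_{n},D_{n},E_{6},E_{7},E_{8},H_{3},H_{4}$ or $I_{2}(5)$) reproduces each simply laced component twice and sends $I_{2}(5)\mapsto A_{4}$, $H_{3}\mapsto D_{6}$, $H_{4}\mapsto E_{8}$ (the last a slightly larger but routine check, as in \cite[\S1]{DyEmb1}).

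\smallskip

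For (d): let $\L$ be a root subsystem of $\Phi$ with simple system $\G$, so $\L=\Phi_{W_{\L}}=W_{\L}\G$. By (a), $\G'=\varphi(\G)$ is a simple subsystem of $\Psi$; put $W''=W'_{\G'}=\pair{r_{\d}\mid\d\in\G'}$, with root subsystem $\Psi_{W''}=W''\G'$. Since $s_{\g}=r_{\g}r_{\t\g}$ we have $W_{\L}\seq W''$, and $W_{\L}$ is $R$-linear, so $\Psi_{W''}\sreq W_{\L}\G'=(W_{\L}\G)\dotcup\t(W_{\L}\G)=\L\dotcup\t\L$. Both sides being finite, the reverse inclusion follows by comparing cardinalities: $\vert\L\dotcup\t\L\vert=2\vert\L\vert$, while $\vert\Psi_{W''}\vert$ is the number of roots of the Coxeter type of $\G'$, which by (c) is the ``unfolded'' type of $\G$ — and for each of $A_{n},D_{n},E_{6},E_{7},E_{8},H_{3},H_{4},I_{2}(5)$ the unfolding has exactly twice as many roots. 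Hence $\Psi_{W''}=\L\dotcup\t\L$, so $\varphi'(\L):=\L\dotcup\t\L$ is a root subsystem of $\Psi$ which is a union of bundles and is the one generated by $\varphi(\G)$, i.e.\ $\varphi'$ is induced by $\varphi$. Injectivity is $\L=\varphi'(\L)\cap\Phi$; for surjectivity, a root subsystem $\Sig$ of $\Psi$ that is a union of bundles, $\Sig=\L\dotcup\t\L$ with $\L=\Sig\cap\Phi$, has $\L$ closed under its reflections (as $s_{\l}=r_{\l}r_{\t\l}$ and $\Sig$ is closed), hence $\L$ is a root subsystem of $\Phi$ with $\varphi'(\L)=\Sig$; equivariance is again $R$-linearity. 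The main obstacle is exactly this reverse inclusion $\Psi_{W''}\seq\L\dotcup\t\L$: a naive ``closure'' argument breaks down because $\Phi\cap\real\L$ can be strictly larger than $\L$ (e.g.\ $\L$ of type $A_{1}^{4}$ inside $\Phi=D_{4}$), so one is forced to pin $\vert\Psi_{W''}\vert$ down through the explicit type provided by (c); a secondary delicate point is the $m\in\set{2,3,5}$ analysis underlying the computations in (a)--(c).
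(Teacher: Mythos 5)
Your computations are correct, and for parts (a)--(c) you follow essentially the same route as the paper: everything is read off the table of values $(\g,\g')=a$, $(\g,\t\g')=(\t\g,\g')=b$, $(\t\g,\t\g')=a+b$ for $\mpair{\g,\g'}=a+b\t$ ranging over the finitely many realizable inner products. (The paper obtains that finite list by conjugating any pair of roots into a standard parabolic of rank at most two and checking types $A_{1}$, $A_{1}\times A_{1}$, $A_{2}$, $I_{2}(5)$, rather than by your degree argument on $-2\cos(\pi/m)$; both work. For (c) the paper cites \cite[\S1]{DyEmb1} for $H_{4}\to E_{8}$ and deduces $H_{3}$, $I_{2}(5)$ as parabolics, while you unfold edge by edge.) For (d) you genuinely diverge: the paper gets that $\L\dotcup\t\L$ is a root subsystem with simple system $\varphi(\G)$ for free, by applying the construction of \ref{noncrystsetup} to the pair $(\L,\G)$ in place of $(\Phi,\Pi)$, whereas you pin down $\Psi_{W''}=\L\dotcup\t\L$ by counting roots of the unfolded type. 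Your count is right (each unfolding doubles the number of roots), but it makes (d) depend on the full classification in (c), including the $H_{4}\to E_{8}$ identification you deferred to a ``routine check''; the paper's route avoids that dependence.

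One auxiliary fact you invoke is false as stated and should be repaired. In a non-crystallographic root system it is \emph{not} true that a linearly independent set of roots with pairwise non-positive inner products is a simple subsystem: in $I_{2}(5)$ two roots at angle $3\pi/5$ have inner product $1-\t<0$ and are independent, yet they are not a simple system of the dihedral subgroup they generate (its simple system sits at angle $4\pi/5$, inner product $-\t$). The correct criterion, which the paper quotes from \cite{DyRef}, is that the pairwise inner products lie in $\set{0,-1,-\t}$ (resp.\ $\set{0,-1}$ for $\Psi$), i.e.\ are of the form $-2\cos(\pi/m)$. The only place this bites is the surjectivity step of (a): there your inequalities $a\le 0$, $b\le 0$, combined with the realizable list $\set{0,\pm1,\pm\t,\pm(\t-1),\pm2}$, in fact exclude $1-\t$ (which has $a=1>0$) and force $\mpair{\l,\l'}\in\set{0,-1,-\t}$, so the conclusion stands once you invoke the correct criterion rather than mere non-positivity.
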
 
\begin{proof}
Given $\a,\b\in \Phi$, there exists $w\in W$ such that $w\a,w\b$ 
are in a  standard parabolic subgroup of $W$ of rank at most two. By an 
easy case by case check of the possibilities (which are  types $A_{1}$, $A_{1}\times A_{1}$, $A_{2}$ or $I_{2}(5)$), one sees that $\mpair{\a,\b}\in \set{0,\pm 1, \pm (\tau-1), \pm \tau,\pm 2}$.  One has the following possibilities for inner products:

\begin{center}
\begin{tabular}{|c| |c|c|c|c|}\hline
$\mpair{\a,\b}$&$(\a,\b)$&$(\a,\t\b)$&$(\t\a,\b)$&$(\t\a,\t\b)$\\ \hline
$0$ & $0$&$0$&$0$&$0$\\ \hline
$\pm 1$ & $\pm 1$&$0$&$0$&$\pm1$\\ \hline
$\pm (\t-1)$ & $\mp 1$&$\pm 1$& $\pm 1$& $0$\\ \hline
$\pm \tau$ &$0$&$\pm 1$&$\pm 1$&$\pm 1$\\ \hline
$\pm 2$ & $\pm 2$&$0$&$0$&$\pm2$\\ \hline
\end{tabular}\end{center}

Note that (using \cite{DyRef} for instance) a subset $\G$ of $\Phi$ is  a simple subsystem if  and only if it 
is  $R$-linearly 
independent and for any distinct $\a,\b\in \G$, one has 
$\mpair{\a,\b}\in \set{0,-1,-\t}$. Similarly, a subset $\G'$ of $\Psi$ is a simple subsystem if  and only if it 
is $\Int$-linearly 
independent and for any distinct $\a,\b\in \G'$, one has 
$\mpair{\a,\b}\in \set{0,-1}$. Hence (a) follows by inspection of the above table.  Note next that the set of  non-negative  inner products
 $\mpair{\a,\b}$ with $\a,\b\in \Phi$ is contained in  $\set{0,1,\tau-1,\tau,2}$. Part  (b) follows immediately from the table and the definition of  
 fundamental chamber. 
 
   It is enough to prove (c) for irreducible $\Phi$. 
 For $\Phi$ of type $H_{4}$, the result is clear from  \cite[\S1]
{DyEmb1}.  The cases of  $H_{3}$, $I_{2}(5)$ follow,  since they are  induced embeddings of  standard parabolic subgroups of $H_{4}$ in standard parabolic subgroups of $E_{8}$. The simply laced cases hold by inspection of the  table. 

It remains to prove (d). Let $\L$ be a root subsystem of $\Phi$ and let $\G$ be a simple subsystem of $\L$. 
The discussion of \ref{noncrystsetup} applied to $(\L,\G)$ instead of 
$(\Phi,\Pi)$  implies  that  $\varphi'(\L):=\L\cup\tau\L$ 
(which is obviously  a union of bundles) is a root subsystem of $\Psi$   with a simple system $\varphi(\G)=\G\cup \tau(\G)$. Obviously the map $\varphi'$ so defined is $W$-equivariant, and it is injective since
$\Phi\cap \varphi'(\L)=\L$.  To complete the proof of (d), take any root subsystem $\L'$ of $\Psi$ which is a union of bundles.
Let $\L:=\L'\cap \Phi$. For $\a,\b\in \L$, one has $s_{\a}(\b)=r_{\a}r_{\tau \a}(\b)\in \Phi\cap \L'=\L$ since $\a,\tau\a\in \L'$. Hence $\L$ is a root subsystem of $\Phi$, and clearly $\varphi'(\L)=\L'$.  \end{proof}
 
\subsection{Proof of \ref{prop:osh}(a) for $H_{3}$ and $H_{4}$}\label{oshend}
It is now easy to deduce the validity of  Proposition \ref{prop:osh}(b)
(which is equivalent to \ref{prop:osh}(a))
for $(W,S)$ of type $H_{4}$ or $H_{3}$ from its validity for simply laced root systems  as follows. Without loss of 
generality, take  $(W,S,\Phi, \Pi)$ as in  \ref{noncrystsetup} and define $(W', S',\Psi,\D)$ as there. Fix $J\seq S$ and let $J'\seq S'$ with 
$\D_{J'}=\Pi_{J}\dotcup \t\Pi_{J}$.
Let $\a,\b\in (\Phi\sm \Phi_{J})\cap \CC_{W_{J}}$ with
$\b\in \a+\real\Pi_{J}$.  Since $\Phi\seq R\Pi$, this gives 
$\b\in \a+R \Pi_{J}=\a+\Int\D_{J'}$. 
Note also that $\Phi_{J}=\Phi\cap R\Pi_{J}$. Since  $\a,\b\in \Phi\sm \Phi_{J}$ and $\tau$ is a unit in $R$, one has
 $\t\a,\t\b\not\in R\Pi_{J}=\Int\D_{J'}$ and so 
 $\t\a,\t\b\in \Psi\sm \Psi_{J'}$. By Proposition   \ref{prop:noncryst}(b),  one has $\t\a,\t\b\in \CC_{W'_{J'}}$. By Lemma \ref{prop:osh}(b) for $(W',S',\Phi',\D')$, which is already known from Section \ref{s4} since  $\Psi$ is crystallographic, it follows that $\t\a=\t\b$ and thus $\a=\b$. This proves \ref{prop:osh}(b) for  $(W,S,\Phi,\Pi)$ and completes the proof of  Proposition \ref{prop:osh}(a), and hence all results of this paper, in all cases.

\end{document}